\def\sgn#1{\mathrm{sgn}(#1)}
\newtheorem{theorem}{Theorem}[section]
\newtheorem{claim}[theorem]{Claim}
\newtheorem{lemma}[theorem]{Lemma}
\newtheorem{proposition}[theorem]{Proposition}
\newtheorem{remark}{Remark}[section]
\begin{document}
\title{Large deviations for a class of tempered subordinators 
and their inverse processes\footnote{CM and BP acknowledge
the support of GNAMPA-INdAM and of MIUR Excellence Department 
Project awarded to the Department of Mathematics, University of 
Rome Tor Vergata (CUP E83C18000100006).}}
\author{Nikolai Leonenko\thanks{Address: Cardiff School of 
Mathematics, Cardiff University, Senghennydd Road, Cardiff, CF24 
4AG, UK. e-mail: \texttt{leonenkon@cardiff.ac.uk}} \and
Claudio Macci\thanks{Address: Dipartimento di Matematica,
Universit\`a di Roma Tor Vergata, Via della Ricerca Scientifica,
I-00133 Rome, Italy. e-mail: \texttt{macci@mat.uniroma2.it}} \and
Barbara Pacchiarotti\thanks{Address: Dipartimento di Matematica,
Universit\`a di Roma Tor Vergata, Via della Ricerca Scientifica,
I-00133 Rome, Italy. e-mail: \texttt{pacchiar@mat.uniroma2.it}}}
\date{}
\maketitle
\begin{abstract}
We consider a class of tempered subordinators, namely a class of
subordinators with one-dimensional marginal tempered distributions
which belong to a family studied in \cite{BCCP}. The main 
contribution in this paper is a non-central moderate deviations
result. More precisely we mean a class of large deviation principles
that fill the gap between the (trivial) weak convergence of some
non-Gaussian identically distributed random variables to their common
law, and the convergence of some other related random variables to a 
constant. Some other minor results concern large deviations for the 
inverse of the tempered subordinators considered in this paper; 
actually, in some results, these inverse processes appear as random 
time-changes of other independent processes.\\
\ \\
\textbf{Keywords:} Mittag-Leffler function, non-central moderate 
deviations, random time-changes, Tweedie distribution.\\
\emph{2010 Mathematical Subject Classification}: 60F10, 60G52, 60J25.
\end{abstract}

\section{Introduction}
Several non-standard stochastic processes in the literature are defined by
$\{X(T(t)):t\geq 0\}$, where $\{T(t):t\geq 0\}$ is an independent random 
time-change of a standard stochastic process $\{X(t):t\geq 0\}$. An important
class of random time-changes is given by subordinators, i.e. nondecreasing 
L\'evy processes (see e.g. \cite{Bertoin} and \cite{Sato} as references on 
these processes); however, in several recent references the process 
$\{T(t):t\geq 0\}$ is the inverse of a subordinator.

The family of (positive) stable subordinators is widely studied. An important 
feature of these processes is that their finite dimensional distributions do 
not have finite moments. In some situations this could be a problem and this
explains the increasing popularity of the tempered version of stable 
subordinators. In fact these tempered processes have finite dimensional 
distributions with finite moments, and they keep some other properties of the 
stable subordinators themselves (for instance in both cases the finite dimensional
distributions are self-decomposable, and therefore infinite divisible). Here we 
recall \cite{Rosinski}, \cite{GajdaWylomanska}, \cite{Wylomanska2012}, 
\cite{Wylomanska2013}, \cite{KuchlerTappe}, \cite{KumarVellaisamy} as references 
on tempered stable processes, tempered stable subordinators and, in some cases, 
on inverse of stable subordinators; other more recent references are 
\cite{GajdaKumarWylomanska}, \cite{KumarUpadhyeWylomanskaGajda}, 
\cite{KumarGajdaWylomanskaPoloczanski}, \cite{GuptaKumarLeonenko} and 
\cite{KumarLeonenkoPichler}. We also recall \cite{Grabchak} as a brief survey on
tempered stable distributions and their associated L\'evy processes.

The interest of the processes studied in this paper is motivated by their connections
with important research fields as, for instance, the theory of fractional differential
equations (see e.g. \cite{KilbasSrivastavaTrujillo}; see also \cite{Beghin} for the 
tempered case) and the theory of processes with long-range dependence (see e.g. 
\cite{Samorodnitsky}).

In this paper we consider a 4-parameter family of infinitely divisible 
distributions introduced in \cite{BCCP} (Section 3), which is inspired by some 
ideas in \cite{KlebanovSlamova}. This family, which generalizes the Tweedie 
distribution (case $\delta=0$) and the positive Linnik distribution (case 
$\theta=0$), is constructed by considering the randomization of the parameter 
$\lambda$ with a Gamma distributed random variable. Actually, in our results, we
often have to restrict the analysis on the case $\delta=0$.

The asymptotic results presented in this paper concern the theory of large
deviations; see e.g. \cite{DemboZeitouni} as a reference on this topic. This 
theory gives asymptotic computations of small probabilities on an exponential 
scale. Here we also recall \cite{GulinskyVeretennikov} as a reference on large
deviations and the averaging theory.

We start with some preliminaries in Section \ref{sec:preliminaries}. Section 
\ref{sec:MD} is devoted to the main contribution in this paper, i.e. a class
of large deviation principles that can be seen as a result of 
\emph{non-central moderate deviations} with respect to $\theta$ (see Proposition
\ref{prop:MD-theta-to-infinity} as $\theta\to\infty$; see also Remark 
\ref{rem:case-theta-to-zero} for the case $\theta\to 0$). These large deviation 
principles fill the gap between two asymptotic regimes:
\begin{itemize}
\item a weak convergence to a non-Gaussian distribution; actually we have a 
family of identically distributed random variables, which converge weakly to 
their common law;
\item the convergence to a constant of some other related random variables.
\end{itemize}
This is illustrated in detail in Remark \ref{rem:MD-typical-features}. Obviously,
we use the term \lq\lq non-central\rq\rq\ because the weak limit in the first 
item is not Gaussian.

In Section \ref{sec:LD-inverse-processes} we present some other minor 
large deviation results for the inverse of the subordinators studied in 
this paper; this will be done by applying the results in \cite{DuffieldWhitt}. 
Some other minor results are presented in Section \ref{sec:LD-time-changes}, where
the inverse of the subordinators studied in this paper are random time-changes of 
other independent processes.

\section{Preliminaries and some remarks}\label{sec:preliminaries}
In this section we present some preliminaries on large deviations 
and on the family of tempered distributions introduced in \cite{BCCP}.

\subsection{Preliminaries on large deviations}\label{sec:preliminaries-LD}
Here we recall some preliminaries on the theory of large deviations; see e.g.
the definitions in \cite{DemboZeitouni}, pages 4-5. Let $\mathcal{Y}$ be a 
topological space, and let $\{Y_r\}_r$ be a family of $\mathcal{Y}$-valued 
random variables defined on the same probability space $(\Omega,\mathcal{F},P)$;
then $\{Y_r\}_r$ satisfies the large deviation principle (LDP from now on), as 
$r\to r_0$ (possibly $r_0=\infty$), with speed $v_r$ and rate function $I$ if: 
$v_r\to\infty$ as $r\to r_0$, $I:\mathcal{Y}\to [0,\infty]$ is a lower 
semicontinuous function, and the inequalities
$$\liminf_{r\to r_0}\frac{1}{v_r}\log P(Y_r\in O)\geq-\inf_{y\in O}I(y)\ \mbox{for all open sets}\ O$$
and
$$\limsup_{r\to r_0}\frac{1}{v_r}\log P(Y_r\in C)\leq-\inf_{y\in C}I(y)\ \mbox{for all closed sets}\ C$$
hold. A rate function is said to be good if  
$\{\{y\in\mathcal{Y}:I(y)\leq\eta\}:\eta\geq 0\}$ is a family of compact sets.

We essentially deal with cases where $\mathcal{Y}=\mathbb{R}^h$ for some integer
$h\geq 1$, and we often use the G\"artner Ellis Theorem (see e.g. Theorem 2.3.6
in \cite{DemboZeitouni}). Here we briefly recall the statement of that theorem
and, in view of what follows, throughout this paper we use the notation 
$\langle\cdot,\cdot\rangle$ for the inner product in $\mathbb{R}^h$. Assume that
there exists
$$\lim_{r\to r_0}\frac{1}{v_r}\log\mathbb{E}[e^{v_r\langle y,Y_r\rangle}]=\Lambda(y)\ (\mbox{for all}\ y\in\mathbb{R}^h)$$
as an extended real number; moreover assume that the function $\Lambda$ is finite
in a neighborhood of the origin (i.e. $y=0$, where $0\in\mathbb{R}^h$ is the null
vector), and it is lower semicontinuous and essentially smooth according to 
Definition 2.3.5 in \cite{DemboZeitouni}. Then $\{Y_r\}_r$ satisfies the LDP with
speed $v_r$ and good rate function $\Lambda^*$ defined by
$$\Lambda^*(x):=\sup_{y\in\mathbb{R}^h}\{\langle x,y\rangle-\Lambda(y)\}.$$
The function $\Lambda^*$ is called Fenchel-Legendre transform of the function $\Lambda$.

\begin{remark}\label{rem:convergence-under-GET}
Let us consider the above setting of the G\"artner Ellis Theorem and, for simplicity, we 
consider the case $r_0=\infty$. Moreover we consider the closed set
$C_\delta:=\{x\in\mathbb{R}:\|x-\nabla\Lambda(0)\|\geq\delta\}$ for some $\delta>0$;
then, since $\Lambda^*(x)=0$ if and only if $x=\nabla\Lambda(0)$, we have
$\Lambda^*(C_\delta):=\inf_{y\in C_\delta}\Lambda^*(y)>0$. We want to consider the LDP
upper bound for the closed set $C_\delta$. Then, for all $\varepsilon>0$ small enough,
there exists $r_\varepsilon$ such that
$$P(|Y_r-\nabla\Lambda(0)|\geq\delta)\leq e^{-v_r(\Lambda^*(C_\delta)-\varepsilon)}
\ \mbox{for all}\ r>r_\varepsilon.$$
Thus $Y_r$ converges to $\nabla\Lambda(0)$ in probability. Moreover it is possible
to check the almost sure convergence along a sequence $\{r_n:n\geq 1\}$ such that
$r_n\to\infty$; in fact, by a standard application of Borel Cantelli Lemma, we can
say that $Y_{r_n}$ converges to $\nabla\Lambda(0)$ almost surely if
\begin{equation}\label{eq:BC-lemma-convergence}
\sum_{n\geq 1}e^{-v_{r_n}(\Lambda^*(C_\delta)-\varepsilon)}<\infty;
\end{equation}
for instance, when $v_r=r$ (we have this situation in Sections 
\ref{sec:LD-inverse-processes} and \ref{sec:LD-time-changes}), condition
\eqref{eq:BC-lemma-convergence} holds with the sequence $r_n=n$.
\end{remark}

Here we also recall the contraction principle (see e.g. Theorem 4.2.1 in 
\cite{DemboZeitouni}), that will be used in Remark 
\ref{rem:increments-vs-marginals}. Let $\{Y_r\}_r$ be a family of 
$\mathcal{Y}$-valued random variables defined on the same probability (as above),
and assume that $\{Y_r\}_r$ satisfies the LDP, as $r\to r_0$, with speed $v_r$ 
and good rate function $I$. Then, if we consider a continuous function
$f:\mathcal{Y}\to\mathcal{Z}$, where $\mathcal{Z}$ is another topological space,
the family of $\mathcal{Z}$-valued random variables $\{f(Y_r)\}_r$ satisfies the
LDP, as $r\to r_0$, with speed $v_r$ and good rate function $J$ defined by
$$J(z):=\inf\{I(y):y\in\mathcal{Y},\ f(y)=z\}.$$

\subsection{Preliminaries on the tempered distributions in this paper}\label{sec:preliminaries-BCCP}
We consider a family of subordinators $\{S_{(\gamma,\lambda,\theta,\delta)}(t):t\geq 0\}$,
where the parameters $(\gamma,\lambda,\theta,\delta)$ belong to a suitable set
$\mathcal{P}:=\mathcal{P}_1\cup\mathcal{P}_2$, i.e.
$$\mathcal{P}_1=(-\infty,0)\times(0,\infty)\times(0,\infty)\times[0,\infty)\
\mbox{and}\ \mathcal{P}_2=(0,1)\times(0,\infty)\times [0,\infty)\times[0,\infty);$$
actually other cases could be allowed ($\gamma=0$ when 
$(\gamma,\lambda,\theta,\delta)\in\mathcal{P}_1$
and $\gamma=1$ when $(\gamma,\lambda,\theta,\delta)\in\mathcal{P}_2$) but they will
be neglected because they give rise to deterministic random variables. Then, for each 
$(\gamma,\lambda,\theta,\delta)\in\mathcal{P}$ and for all $t\geq 0$, we consider the
moment generating function
$$\mathbb{E}[e^{yS_{(\gamma,\lambda,\theta,\delta)}(t)}]=\exp(t\kappa_{(\gamma,\lambda,\theta,\delta)}(y))\ (\mbox{for all}\ y\in\mathbb{R}),$$
where
$$\kappa_{(\gamma,\lambda,\theta,\delta)}(y):=\log\mathbb{E}[e^{yS_{(\gamma,\lambda,\theta,\delta)}(1)}].$$
We remark that we are setting $y=-s$, where $s>0$ is the argument of the Laplace transforms 
in \cite{BCCP} and, moreover, we have $\mathbb{E}[e^{yS_{(\gamma,\lambda,\theta,\delta)}(1)}]=\infty$
for some $y>0$. Furthermore, in view of the applications of the G\"artner Ellis Theorem,
it is useful to introduce the Fenchel-Legendre transform of the function 
$\kappa_{(\gamma,\lambda,\theta,\delta)}$, i.e. the function 
$\kappa_{(\gamma,\lambda,\theta,\delta)}^*$ defined by
\begin{equation}\label{eq:Legendre-transform-kappa}
\kappa_{(\gamma,\lambda,\theta,\delta)}^*(x):=\sup_{y\in\mathbb{R}}\{xy-\kappa_{(\gamma,\lambda,\theta,\delta)}(y)\}.
\end{equation}
We remark that, when we deal with $\{S_{(\gamma,\lambda,\theta,\delta)}(t):t\geq 0\}$,
the G\"artner Ellis Theorem can be applied only when $\theta>0$; in fact in this case the 
function $\kappa_{(\gamma,\lambda,\theta,\delta)}$ is finite in a neighborhood of the 
origin $y=0\in\mathbb{R}$.

\paragraph{Case $\delta=0$.}
This is the case of Tweedie distribution (see Section 2.2 in \cite{BCCP} and the 
references cited therein). We have
$$\kappa_{(\gamma,\lambda,\theta,0)}(y):=\log\mathbb{E}[e^{yS_{(\gamma,\lambda,\theta,0)}(1)}]
=\lambda\hspace{1.0pt}\sgn{\gamma}(\theta^\gamma-(\theta-y)^\gamma)$$
if $y\leq\theta$, and equal to infinity otherwise. Note that
$$\kappa_{(\gamma,\lambda,\theta,0)}(y)=\lambda\kappa_{(\gamma,1,\theta,0)}(y).$$
Thus we have the two following cases:
$$\mbox{if}\ \gamma\in(-\infty,0),\quad\kappa_{(\gamma,\lambda,\theta,0)}(y):=\log\mathbb{E}[e^{yS_{(\gamma,\lambda,\theta,0)}(1)}]
=\left\{\begin{array}{ll}
\frac{\lambda}{\theta^{-\gamma}}\left(\left(\frac{\theta}{\theta-y}\right)^{-\gamma}-1\right)&\ \mbox{if}\ y<\theta\\
\infty&\ \mbox{otherwise},
\end{array}
\right.$$
that is a compound Poisson distribution with Gamma distributed jumps;
$$\mbox{if}\ \gamma\in(0,1),\quad\kappa_{(\gamma,\lambda,\theta,0)}(y):=\log\mathbb{E}[e^{yS_{(\gamma,\lambda,\theta,0)}(1)}]
=\left\{\begin{array}{ll}
\lambda(\theta^\gamma-(\theta-y)^\gamma)&\ \mbox{if}\ y\leq\theta\\
\infty&\ \mbox{otherwise},
\end{array}
\right.$$
that is the tempered positive Linnik distribution (actually we have 
the tempered case if $\theta>0$). In view of the application of the G\"artner Ellis
Theorem, we can get the full LDP if the function
$\kappa_{(\gamma,\lambda,\theta,0)}$ is steep; then, in both cases $\gamma\in(0,1]$
and $\gamma\in(-\infty,0)$, we need to check the condition
$\lim_{y\to\theta^-}\kappa_{(\gamma,\lambda,\theta,0)}^\prime(y)=\infty$
and this can be easily done (the details are omitted).

\paragraph{Case $\delta>0$.}
We construct this case starting from the previous one and by considering a Gamma
subordination, i.e. a randomization of the parameter $\lambda$ with a Gamma 
distributed random variable $G_{\delta,\lambda}$ such that
$$\mathbb{E}[e^{yG_{\delta,\lambda}}]=(1-\lambda\delta y)^{-1/\delta}=\left(\frac{(\lambda\delta)^{-1}}{(\lambda\delta)^{-1}-y}\right)^{1/\delta}$$
if $y<(\lambda\delta)^{-1}$, and equal to infinity otherwise. Then, by taking into
account the moment generating function of the random variable $G_{\delta,\lambda}$,
we have
$$\mathbb{E}[e^{yS_{(\gamma,\lambda,\theta,\delta)}(1)}]:=\mathbb{E}[e^{\kappa_{(\gamma,1,\theta,0)}(y)G_{\delta,\lambda}}]
=(1-\delta\kappa_{(\gamma,\lambda,\theta,0)}(y))^{-1/\delta}
=(1-\lambda\delta\hspace{1.0pt}\sgn{\gamma}(\theta^\gamma-(\theta-y)^\gamma))^{-1/\delta}$$
if $y\leq\theta$ and $\sgn{\gamma}(\theta^\gamma-(\theta-y)^\gamma)<(\lambda\delta)^{-1}$,
and equal to infinity otherwise. Thus, for the same values of $y$, the function 
$\kappa_{(\gamma,\lambda,\theta,\delta)}$ is defined by
$$\kappa_{(\gamma,\lambda,\theta,\delta)}:=-\frac{1}{\delta}\log(1-\lambda\delta\hspace{1.0pt}\sgn{\gamma}(\theta^\gamma-(\theta-y)^\gamma)).$$
Moreover, let $y_0$ be the abscissa of convergence of the function 
$\kappa_{(\gamma,\lambda,\theta,\delta)}$, and therefore we have
$\kappa_{(\gamma,\lambda,\theta,\delta)}(y)<\infty$ for $y<y_0$ and 
$\kappa_{(\gamma,\lambda,\theta,\delta)}(y)<\infty$ for $y>y_0$. We remark that
$y_0\in[0,\theta]$. Then we can easily check the steepness of 
$\kappa_{(\gamma,\lambda,\theta,\delta)}$, i.e.
$$\kappa_{(\gamma,\lambda,\theta,\delta)}^\prime(y)
=\frac{\kappa_{(\gamma,\lambda,\theta,0)}^\prime(y)}{1-\delta\kappa_{(\gamma,\lambda,\theta,0)}(y)}
\to\infty\ \mbox{as}\ y\to y_0^-.$$

Finally, as pointed out in \cite{BCCP} (see just after equation (11)), we note that
$$\lim_{\delta\to 0^+}\mathbb{E}[e^{yS_{(\gamma,\lambda,\theta,\delta)}(1)}]
=\lambda\hspace{1.0pt}\sgn{\gamma}(\theta^\gamma-(\theta-y)^\gamma)$$
for all $y\leq\theta$, and equal to infinity otherwise. Therefore we recover the case
$\delta=0$ by taking the limit as $\delta\to 0^+$; in fact the random variable 
$G_{\delta,\lambda}$ converges weakly to the constant $\lambda$ as $\delta\to 0^+$.

\paragraph{Some further comments on both cases $\delta=0$ and $\delta>0$, for $\theta>0$.}
Let $\theta>0$ be fixed. Then, for all $\delta\geq 0$,
$\frac{S_{(\gamma,\lambda,\theta,\delta)}(t)}{t}\to\kappa_{(\gamma,\lambda,\theta,\delta)}^\prime(0)=\lambda\gamma\theta^{\gamma-1}$
as $t\to\infty$ because the rate function $\kappa_{(\gamma,\lambda,\theta,\delta)}^*(x)$ uniquely vanishes
at $x=\kappa_{(\gamma,\lambda,\theta,\delta)}^\prime(0)$. Thus, since the limit value 
$\kappa_{(\gamma,\lambda,\theta,\delta)}^\prime(0)$ does not depend on $\delta$, it is interesting to 
see how the rate function $\kappa_{(\gamma,\lambda,\theta,\delta)}^*(x)$ varies with $\delta$ around the
limit value. In fact, if there exists $\rho>0$ small enough such that
$$\kappa_{(\gamma,\lambda,\theta,\delta_1)}^*(x)>\kappa_{(\gamma,\lambda,\theta,\delta_2)}^*(x)\ \mbox{for}\ 0<|x-\lambda\gamma\theta^{\gamma-1}|<\rho,$$
we can say that $\frac{S_{(\gamma,\lambda,\theta,\delta_1)}(t)}{t}$ converges faster than
$\frac{S_{(\gamma,\lambda,\theta,\delta_2)}(t)}{t}$.

In view of what follows, we remark that
$$\kappa_{(\gamma,\lambda,\theta,\delta)}^{\prime\prime}(0)
=\frac{\kappa_{(\gamma,\lambda,\theta,0)}^{\prime\prime}(0)(1-\delta\kappa_{(\gamma,\lambda,\theta,0)}(0))
+\delta(\kappa_{(\gamma,\lambda,\theta,0)}^\prime(0))^2}{(1-\delta\kappa_{(\gamma,\lambda,\theta,0)}(0))^2}
=\kappa_{(\gamma,\lambda,\theta,0)}^{\prime\prime}(0)+\delta(\kappa_{(\gamma,\lambda,\theta,0)}^\prime(0))^2$$
for all $\delta>0$; actually we can also take $\delta=0$ as a trivial equality. Thus, for
$0\leq\delta_1<\delta_2$, we have
$$\kappa_{(\gamma,\lambda,\theta,\delta_1)}^{\prime\prime}(0)<\kappa_{(\gamma,\lambda,\theta,\delta_2)}^{\prime\prime}(0)$$ 
and the above local inequality between $\kappa_{(\gamma,\lambda,\theta,\delta_1)}^*(x)$ and
$\kappa_{(\gamma,\lambda,\theta,\delta_2)}^*(x)$ holds by some properties of the Fenchel-Legendre transform.
We also remark that our conclusion has some relationship with the Gamma subordination explained above for 
the case $\delta>0$; in fact we have $\mathrm{Var}[G_{\delta,\lambda}]=\lambda^2\delta$, and therefore 
$\delta_1<\delta_2$ yields $\mathrm{Var}[G_{\delta_1,\lambda}]<\mathrm{Var}[G_{\delta_2,\lambda}]$.

Finally we note that, if we take $\delta_2>0$, we get
$$\kappa_{(\gamma,\lambda,\theta,\delta_2)}(y)=-\frac{1}{\delta_2}\log(1-\delta_2\kappa_{(\gamma,\lambda,\theta,0)}(y))
\geq\kappa_{(\gamma,\lambda,\theta,0)}(y)$$
for all $y$ such that $\kappa_{(\gamma,\lambda,\theta,0)}(y)<\frac{1}{\delta_2}$. Then, if $\delta_1=0$, the
above local inequality between $\kappa_{(\gamma,\lambda,\theta,\delta_1)}^*(x)$ and 
$\kappa_{(\gamma,\lambda,\theta,\delta_2)}^*(x)$ holds for all $x\neq\lambda\gamma\theta^{\gamma-1}$.

\subsection{Some remarks}\label{sub:remarks-in-preliminaries}
Here we present some other minor remarks on the family of subordinators studied in this paper.

\begin{remark}[Composition of independent processes]\label{rem:composition-of-independent-subordinators}
It is well-known (and it is easy to check) that, if we consider $h$ independent 
subordinators 
$\{\{S_{(\gamma_i,\lambda_i,\theta_i,\delta_i)}(t):t\geq 0\}:i\in\{1,\ldots,h\}\}$,
the process $\{S(t):t\geq 0\}$ defined by
$$S(t):=S_{(\gamma_1,\lambda_1,\theta_1,\delta_1)}\circ\cdots\circ
S_{(\gamma_n,\lambda_n,\theta_n,\delta_h)}(t)$$
is a subordinator and, moreover, for all $t\geq 0$ we have
$$\mathbb{E}[e^{yS(t)}]=e^{t\kappa_S(y)},\ \mbox{where}\ 
\kappa_S(y):=\kappa_{(\gamma_h,\lambda_h,\theta_h,\delta_h)}\circ\cdots\circ\kappa_{(\gamma_1,\lambda_1,\theta_1,\delta_1)}(y).$$
A natural question is whether, in some cases, the composition of independent processes 
in this family still belongs to this family. One can check that this is possible 
only in a very particular case, i.e.
$$(\gamma_i,\lambda_i,\theta_i,\delta_i)=(\gamma_i,1,0,0)\ \mbox{with}\ \gamma_i\in(0,1),\ \mbox{for all}\ i\in\{1,\ldots,h\},$$
and we have
$$\kappa_{(\gamma_h,1,0,0)}\circ\cdots\circ\kappa_{(\gamma_1,1,0,0)}(y)=\kappa_{(\gamma_1\cdots \gamma_h,1,0,0)}.$$
\end{remark}

\begin{remark}[Generalization of the mixtures in \cite{GuptaKumarLeonenko}]\label{rem:generalization-of-mixtures-in-the-literature}
We consider $h$ independent subordinators 
$\{\{S_{(\gamma_i,\lambda_i,\theta_i,\delta_i)}(t):t\geq 0\}:i\in\{1,\ldots,h\}\}$
and, for some $c_1,\ldots,c_h>0$, let $\{S(t):t\geq 0\}$ be the 
process defined by
$$S(t):=\sum_{i=1}^hS_{(\gamma_i,\lambda_i,\theta_i,\delta_i)}(c_it).$$
Note that this kind of processes is a generalization of the mixtures 
studied in \cite{GuptaKumarLeonenko}; actually in that reference the 
authors require some unnecessary restrictions on the parameters (in 
particular the condition $c_1+\cdots+c_h=1$ that explains the term 
\emph{mixture} used in \cite{GuptaKumarLeonenko}).
For all $t\geq 0$ we have
$$\mathbb{E}[e^{yS(t)}]=e^{t\kappa_S(y)},\ \mbox{where}\ \kappa_S(y):=\sum_{i=1}^hc_i\kappa_{(\gamma_i,\lambda_i,\theta_i,\delta_i)}(y).$$
A natural question is whether, in some cases, the generalized mixture of 
processes in this family (according to the terminology here) still belongs to 
this family. One can check that this is possible in a very particular case,
i.e.
$$(\gamma_i,\lambda_i,\theta_i,\delta_i)=(\gamma,\lambda_i,0,0)\ \mbox{for all}\ i\in\{1,\ldots,h\},\ \mbox{for some}\ \gamma\in(0,1),$$
and we have
$$\kappa_S(y)=\sum_{i=1}^hc_i\kappa_{(\gamma,\lambda_i,0,0)}(y)=\sum_{i=1}^hc_i\lambda_i\kappa_{(\gamma,1,0,0)}(y).$$
\end{remark}

\section{Non-central moderate deviations (for $\delta=0$)}\label{sec:MD}
The term moderate deviations is used in the literature for a suitable class 
of LDPs governed by the same rate function; moreover, in some sense, moderate 
deviations fill the gap between a convergence to a constant and a weak 
convergence to a Gaussian distribution (see e.g. Theorem 3.7.1 in 
\cite{DemboZeitouni} which concerns the case of empirical means of i.i.d.
random vectors, and we can refer to the Law of Large Numbers and to the
Central Limit Theorem).

In this section we study a non-central moderate deviation regime for
$\{S_{(\gamma,\lambda,\theta,0)}(t):t\geq 0\}$ with respect to 
$\theta$; as we said above, we use the term non-central because we deal with 
a non-Gaussian weak limit. Here we deal with finite families of increments
of the subordinator; however, as we shall explain in Remark 
\ref{rem:increments-vs-marginals}, it is also possible to present analogue
results for the finite dimensional distributions of the subordinator.

We start with by considering a family of identically distributed random 
variables (and therefore they are trivially weak convergent).

\begin{proposition}\label{prop:identically-distributed-rvs}
Let $m\geq 1$ and $0=t_0<t_1<t_2<\cdots<t_m$ be arbitrarily fixed. Then, for
all $\theta>0$, the random vector $(\theta S_{(\gamma,\lambda,\theta,0)}(t_i/\theta^\gamma)
-\theta S_{(\gamma,\lambda,\theta,0)}(t_{i-1}/\theta^\gamma))_{i=1,\ldots,m}$ is distributed
as $(S_{(\gamma,\lambda,1,0)}(t_i)-S_{(\gamma,\lambda,1,0)}(t_{i-1}))_{i=1,\ldots,m}$. Thus
$$\{(\theta S_{(\gamma,\lambda,\theta,0)}(t_i/\theta^\gamma)
-\theta S_{(\gamma,\lambda,\theta,0)}(t_{i-1}/\theta^\gamma))_{i=1,\ldots,m}:\theta>0\}$$
is a family of identically distributed random vectors.
\end{proposition}
\begin{proof}
By taking into account the independence and the distribution of the increments, for all 
$\theta>0$ we have
\begin{multline*}
\log\mathbb{E}\left[\exp\left(\sum_{i=1}^my_i(\theta S_{(\gamma,\lambda,\theta,0)}(t_i/\theta^\gamma)
-\theta S_{(\gamma,\lambda,\theta,0)}(t_{i-1}/\theta^\gamma))\right)\right]\\
=\sum_{i=1}^m\log\mathbb{E}\left[e^{\theta y_i(S_{(\gamma,\lambda,\theta,0)}(t_i/\theta^\gamma)
-S_{(\gamma,\lambda,\theta,0)}(t_{i-1}/\theta^\gamma))}\right]
=\sum_{i=1}^m\frac{t_i-t_{i-1}}{\theta^\gamma}\kappa_{(\gamma,\lambda,\theta,0)}(\theta y_i)\\
=\left\{\begin{array}{ll}
\sum_{i=1}^m\frac{t_i-t_{i-1}}{\theta^\gamma}\lambda\hspace{1.0pt}\sgn{\gamma}(\theta^\gamma-(\theta-\theta y_i)^\gamma)
&\ \mbox{if}\ \theta y_1,\ldots,\theta y_m\leq\theta\\
\infty&\ \mbox{otherwise}
\end{array}\right.\\
=\left\{\begin{array}{ll}
\sum_{i=1}^m(t_i-t_{i-1})\lambda\hspace{1.0pt}\sgn{\gamma}(1-(1-y_i)^\gamma)&\ \mbox{if}\ y_1,\ldots,y_m\leq 1\\
\infty&\ \mbox{otherwise}
\end{array}\right.
=\sum_{i=1}^m(t_i-t_{i-1})\kappa_{(\gamma,\lambda,1,0)}(y_i).
\end{multline*}
This completes the proof.
\end{proof}

The result stated in Proposition \ref{prop:identically-distributed-rvs} allows to consider different kind 
of weak convergence. Here we mainly consider the case $\theta\to\infty$; the case $\theta\to 0$ will be 
briefly discussed in Remark \ref{rem:case-theta-to-zero}.

\begin{proposition}\label{prop:LD-theta-to-infinity}
Let $m\geq 1$ and $0=t_0<t_1<t_2<\cdots<t_m$ be arbitrarily fixed. Moreover	
let $g(\gamma),h(\gamma)\in\mathbb{R}$ be such that $\gamma-h(\gamma)=1-g(\gamma)>0$. Then the family 
of random vectors
$$\{(\theta^{g(\gamma)}S_{(\gamma,\lambda,\theta,0)}(t_i/\theta^{h(\gamma)})
-\theta^{g(\gamma)}S_{(\gamma,\lambda,\theta,0)}(t_{i-1}/\theta^{h(\gamma)}))_{i=1,\ldots,m}:\theta>0\}$$
satisfies the LDP with speed $\theta^{\gamma-h(\gamma)}$, or equivalently $\theta^{1-g(\gamma)}$, and good
rate function $I_{t_1,\ldots,t_m}$ defined by
$$I_{t_1,\ldots,t_m}(x_1,\ldots,x_m)=\sum_{i=1}^m(t_i-t_{i-1})\kappa_{(\gamma,\lambda,1,0)}^*\left(\frac{x_i}{t_i-t_{i-1}}\right).$$
\end{proposition}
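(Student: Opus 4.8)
The plan is to apply the multivariate Gärtner–Ellis Theorem to the family of random vectors
$$Y_\theta:=\left(\theta^{g(\gamma)}S_{(\gamma,\lambda,\theta,0)}(t_i/\theta^{h(\gamma)})
-\theta^{g(\gamma)}S_{(\gamma,\lambda,\theta,0)}(t_{i-1}/\theta^{h(\gamma)})\right)_{i=1,\ldots,m},$$
with $r_0=\infty$ and speed $v_\theta=\theta^{\gamma-h(\gamma)}=\theta^{1-g(\gamma)}$. First I would compute the scaled cumulant generating function. Exactly as in the proof of Proposition \ref{prop:identically-distributed-rvs}, independence and stationarity of the increments give, for $y=(y_1,\ldots,y_m)\in\mathbb{R}^m$,
$$\log\mathbb{E}\left[\exp\left(v_\theta\sum_{i=1}^my_iY_{\theta,i}\right)\right]
=\sum_{i=1}^m\frac{t_i-t_{i-1}}{\theta^{h(\gamma)}}\kappa_{(\gamma,\lambda,\theta,0)}\left(v_\theta\theta^{g(\gamma)}y_i\right),$$
and since $v_\theta\theta^{g(\gamma)}=\theta^{1-g(\gamma)}\theta^{g(\gamma)}=\theta$, the argument of $\kappa_{(\gamma,\lambda,\theta,0)}$ is $\theta y_i$. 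Using the scaling identity $\kappa_{(\gamma,\lambda,\theta,0)}(\theta y_i)=\lambda\,\sgn{\gamma}(\theta^\gamma-(\theta-\theta y_i)^\gamma)=\theta^\gamma\kappa_{(\gamma,\lambda,1,0)}(y_i)$ (valid for $y_i\le 1$, and $=\infty$ otherwise), the $i$-th term becomes $\frac{t_i-t_{i-1}}{\theta^{h(\gamma)}}\cdot\theta^\gamma\kappa_{(\gamma,\lambda,1,0)}(y_i)=\theta^{\gamma-h(\gamma)}(t_i-t_{i-1})\kappa_{(\gamma,\lambda,1,0)}(y_i)$. Dividing by $v_\theta=\theta^{\gamma-h(\gamma)}$, we obtain that the limit is, in fact, constant in $\theta$:
$$\Lambda(y_1,\ldots,y_m)=\sum_{i=1}^m(t_i-t_{i-1})\kappa_{(\gamma,\lambda,1,0)}(y_i)$$
for $y_1,\ldots,y_m\le 1$, and $\Lambda(y)=\infty$ otherwise.

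Next I would verify the Gärtner–Ellis hypotheses for $\Lambda$. Finiteness in a neighbourhood of the origin holds because $\theta>0$ forces $\kappa_{(\gamma,\lambda,1,0)}$ to be finite (indeed smooth) on $(-\infty,1)$, which is an open neighbourhood of $0$; lower semicontinuity is clear since $\kappa_{(\gamma,\lambda,1,0)}$ is continuous on its effective domain $(-\infty,1]$ in both the Linnik case $\gamma\in(0,1)$ and the compound-Poisson case $\gamma\in(-\infty,0)$ (in the latter the effective domain is $(-\infty,1)$ and the function blows up at the boundary). Essential smoothness reduces, by the product structure, to steepness of the single-variable map $\kappa_{(\gamma,\lambda,1,0)}$ at the right endpoint $y=1$ of its domain: one needs $\kappa_{(\gamma,\lambda,1,0)}'(y)\to\infty$ as $y\to 1^-$. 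This is precisely the steepness condition already checked in Section \ref{sec:preliminaries-BCCP} (with $\theta=1$), so I would simply cite that discussion. Gärtner–Ellis then yields the LDP with speed $v_\theta$ and good rate function $\Lambda^*$.

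Finally I would identify $\Lambda^*$ with the claimed $I_{t_1,\ldots,t_m}$. Since $\Lambda$ separates as a sum of functions of the individual coordinates, $y\mapsto\sum_{i=1}^m(t_i-t_{i-1})\kappa_{(\gamma,\lambda,1,0)}(y_i)$, its Legendre transform separates correspondingly:
$$\Lambda^*(x_1,\ldots,x_m)=\sum_{i=1}^m\sup_{y_i\in\mathbb{R}}\left\{x_iy_i-(t_i-t_{i-1})\kappa_{(\gamma,\lambda,1,0)}(y_i)\right\}
=\sum_{i=1}^m(t_i-t_{i-1})\kappa_{(\gamma,\lambda,1,0)}^*\left(\frac{x_i}{t_i-t_{i-1}}\right),$$
where the last equality is the standard rescaling $\sup_y\{xy-c\,f(y)\}=c\,f^*(x/c)$ for $c=t_i-t_{i-1}>0$. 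This is exactly $I_{t_1,\ldots,t_m}$, completing the proof. The only genuinely delicate point is the steepness/essential-smoothness check needed to invoke Gärtner–Ellis; everything else is the scaling bookkeeping already rehearsed in Proposition \ref{prop:identically-distributed-rvs} and a routine separable Legendre-transform computation.
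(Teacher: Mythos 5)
Your proposal is correct and follows essentially the same route as the paper's proof: the same Gärtner--Ellis application, the same scaling computation showing the normalized cumulant generating function is constant in $\theta$ and equals $\sum_i(t_i-t_{i-1})\kappa_{(\gamma,\lambda,1,0)}(y_i)$, and the same separable Legendre-transform identification (which the paper delegates to ``standard computations'' via Lemma 5.1.8 of \cite{DemboZeitouni}, while you spell out the rescaling $\sup_y\{xy-cf(y)\}=cf^*(x/c)$). Your explicit verification of steepness and essential smoothness is slightly more detailed than the paper's, which only refers back to the steepness discussion in Section \ref{sec:preliminaries-BCCP}.
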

\begin{proof}
We want to apply the G\"artner Ellis Theorem. Firstly, we have
\begin{multline*}
\frac{1}{\theta^{\gamma-h(\gamma)}}\log\mathbb{E}\left[\exp\left(\theta^{\gamma-h(\gamma)}
\sum_{i=1}^m y_i(\theta^{g(\gamma)}S_{(\gamma,\lambda,\theta,0)}(t_i/\theta^{h(\gamma)})
-\theta^{g(\gamma)}S_{(\gamma,\lambda,\theta,0)}(t_{i-1}/\theta^{h(\gamma)}))\right)\right]\\
=\frac{1}{\theta^{\gamma-h(\gamma)}}\sum_{i=1}^m\log\mathbb{E}\left[e^{\theta^{\gamma-h(\gamma)+g(\gamma)}
y_i(S_{(\gamma,\lambda,\theta,0)}(t_i/\theta^{h(\gamma)})
-S_{(\gamma,\lambda,\theta,0)}(t_{i-1}/\theta^{h(\gamma)}))}\right]\\
=\sum_{i=1}^m\frac{t_i-t_{i-1}}{\theta^{\gamma-h(\gamma)+h(\gamma)}}\kappa_{(\gamma,\lambda,\theta,0)}(\theta^{\gamma-h(\gamma)+g(\gamma)}y_i)
=\sum_{i=1}^m\frac{t_i-t_{i-1}}{\theta^\gamma}\kappa_{(\gamma,\lambda,\theta,0)}(\theta y_i).
\end{multline*}
Moreover, by taking into account some computations in the proof of Proposition
\ref{prop:identically-distributed-rvs}, the final expression does not depend
on $\theta$, and we have
\begin{equation}\label{eq:theta-invariance}
\sum_{i=1}^m\frac{t_i-t_{i-1}}{\theta^\gamma}\kappa_{(\gamma,\lambda,\theta,0)}(\theta y_i)
=\sum_{i=1}^m(t_i-t_{i-1})\kappa_{(\gamma,\lambda,1,0)}(y_i)\ \mbox{for all}\ \theta>0.
\end{equation}
Then, for all $(y_1,\ldots,y_m)\in\mathbb{R}^m$, we have
\begin{multline*}
\lim_{\theta\to\infty}\frac{1}{\theta^{\gamma-h(\gamma)}}\log\mathbb{E}\left[\exp\left(\theta^{\gamma-h(\gamma)}
\sum_{i=1}^m y_i(\theta^{g(\gamma)}S_{(\gamma,\lambda,\theta,0)}(t_i/\theta^{h(\gamma)})
-\theta^{g(\gamma)}S_{(\gamma,\lambda,\theta,0)}(t_{i-1}/\theta^{h(\gamma)}))\right)\right]\\
=\sum_{i=1}^m(t_i-t_{i-1})\kappa_{(\gamma,\lambda,1,0)}(y_i).
\end{multline*}
So we can apply the G\"artner Ellis Theorem and the desired LDP holds with good
rate function $I_{t_1,\ldots,t_m}$ defined by
$$I_{t_1,\ldots,t_m}(x_1,\ldots,x_m)=\sup_{(y_1,\ldots,y_m)\in\mathbb{R}^m}
\left\{\sum_{i=1}^my_ix_i-\sum_{i=1}^m(t_i-t_{i-1})\kappa_{(\gamma,\lambda,1,0)}(y_i)\right\}.$$
Finally, one can check that the rate function $I_{t_1,\ldots,t_m}$ defined here 
coincides with the one in the statement of the proposition; this can be done 
with some standard computations (for instance one can follow the lines of the proof 
of Lemma 5.1.8 in \cite{DemboZeitouni}).
\end{proof}

For completeness we discuss the convergence of the random variables in Proposition
\ref{prop:LD-theta-to-infinity} to a constant vector. Firstly, for $\theta$ 
large enough which depends on $y_1,\ldots,y_m$ (otherwise the moment generating 
function below is equal to infinity), we have
\begin{multline*}
\log\mathbb{E}\left[\exp\left(\sum_{i=1}^m y_i(\theta^{g(\gamma)}
S_{(\gamma,\lambda,\theta,0)}(t_i/\theta^{h(\gamma)})
-\theta^{g(\gamma)}S_{(\gamma,\lambda,\theta,0)}(t_{i-1}/\theta^{h(\gamma)}))\right)\right]\\
=\sum_{i=1}^m\frac{t_i-t_{i-1}}{\theta^{h(\gamma)}}\kappa_{(\gamma,\lambda,\theta,0)}(\theta^{g(\gamma)}y_i)
=\sum_{i=1}^m\frac{t_i-t_{i-1}}{\theta^{h(\gamma)}}\lambda\hspace{1.0pt}\sgn{\gamma}(\theta^\gamma-(\theta-\theta^{g(\gamma)}y_i)^\gamma)\\
=\sum_{i=1}^m(t_i-t_{i-1})\lambda\hspace{1.0pt}\sgn{\gamma}\theta^{\gamma-h(\gamma)}\left(1-\left(1-\frac{y_i}{\theta^{1-g(\gamma)}}\right)^\gamma\right).
\end{multline*}
Then
\begin{multline*}
\lim_{\theta\to\infty}\log\mathbb{E}\left[\exp\left(\sum_{i=1}^m y_i(\theta^{g(\gamma)}
S_{(\gamma,\lambda,\theta,0)}(t_i/\theta^{h(\gamma)})
-\theta^{g(\gamma)}S_{(\gamma,\lambda,\theta,0)}(t_{i-1}/\theta^{h(\gamma)}))\right)\right]\\
=\sum_{i=1}^my_i(t_i-t_{i-1})\lambda\hspace{1.0pt}\sgn{\gamma}\gamma,
\end{multline*}
so that the random variables in Proposition \ref{prop:LD-theta-to-infinity} converge (as 
$\theta\to\infty$) to the vector $(x_1(\gamma),\ldots,x_m(\gamma))$ defined by
$$x_i(\gamma)=(t_i-t_{i-1})\lambda\hspace{1.0pt}\sgn{\gamma}\gamma\ (\mbox{for all}\ i\in\{1,\ldots,m\}).$$
Moreover, as one can expect, $I_{t_1,\ldots,t_m}(x_1,\ldots,x_m)=0$ if and only if 
\begin{multline*}
(x_1,\ldots,x_m)=\left(\left.\frac{\partial}{\partial y_i}\sum_{i=1}^m(t_i-t_{i-1})
\kappa_{(\gamma,\lambda,1,0)}(y_i)\right|_{(y_1,\ldots,y_m)=(0,\ldots,0)}\right)_{i=1,\ldots,m}\\
=((t_i-t_{i-1})\kappa_{(\gamma,\lambda,1,0)}^\prime(0))_{i=1,\ldots,m}=
(x_1(\gamma),\ldots,x_m(\gamma)).
\end{multline*}

Now we are ready to present the non-central moderate deviation 
result (as $\theta\to\infty$); see also Remark 
\ref{rem:MD-typical-features}.

\begin{proposition}\label{prop:MD-theta-to-infinity}
Let $m\geq 1$ and $0=t_0<t_1<t_2<\cdots<t_m$ be arbitrarily fixed. Moreover	
let $g(\gamma),h(\gamma)$ be such that $\gamma-h(\gamma)=1-g(\gamma)>0$ (as in
Proposition \ref{prop:LD-theta-to-infinity}). Then, for all families of positive 
numbers $\{a_\theta:n\geq 1\}$ such that
\begin{equation}\label{eq:MD-conditions-theta-to-infinity}
a_\theta\to 0\ \mbox{and}\ \theta^{\gamma-h(\gamma)}a_\theta=\theta^{1-g(\gamma)}a_\theta\to\infty\ (\mbox{as}\ \theta\to\infty),
\end{equation}
the family of random vectors
$$\{(a_\theta\theta S_{(\gamma,\lambda,\theta,0)}(t_i/(a_\theta\theta^\gamma))-a_\theta\theta S_{(\gamma,\lambda,\theta,0)}(t_{i-1}/(a_\theta\theta^\gamma)))_{i=1,\ldots,m}:\theta>0\}$$
satisfies the LDP with speed $1/a_\theta$ and good rate function $I_{t_1,\ldots,t_m}$
presented in Proposition \ref{prop:LD-theta-to-infinity}.
\end{proposition}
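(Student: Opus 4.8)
The plan is to apply the G\"artner--Ellis Theorem once more, following verbatim the proof of Proposition~\ref{prop:LD-theta-to-infinity}; the only differences are the time/space rescaling and the fact that the speed is now $1/a_\theta$.

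First I would compute the scaled cumulant generating function of the random vectors in the statement, writing $Z_\theta^{(i)}:=a_\theta\theta S_{(\gamma,\lambda,\theta,0)}(t_i/(a_\theta\theta^\gamma))-a_\theta\theta S_{(\gamma,\lambda,\theta,0)}(t_{i-1}/(a_\theta\theta^\gamma))$ for its $i$-th component. Since $(1/a_\theta)Z_\theta^{(i)}=\theta\bigl(S_{(\gamma,\lambda,\theta,0)}(t_i/(a_\theta\theta^\gamma))-S_{(\gamma,\lambda,\theta,0)}(t_{i-1}/(a_\theta\theta^\gamma))\bigr)$, the factor $1/a_\theta$ in front of the inner product exactly cancels the $a_\theta$ inside the spatial scaling; then, by the independence and the distribution of the increments,
\[
a_\theta\log\mathbb{E}\Bigl[\exp\Bigl(\frac{1}{a_\theta}\sum_{i=1}^m y_iZ_\theta^{(i)}\Bigr)\Bigr]
=\sum_{i=1}^m\frac{t_i-t_{i-1}}{\theta^\gamma}\kappa_{(\gamma,\lambda,\theta,0)}(\theta y_i),
\]
which, by the $\theta$-invariance~\eqref{eq:theta-invariance} already established in the proof of Proposition~\ref{prop:LD-theta-to-infinity}, equals $\sum_{i=1}^m(t_i-t_{i-1})\kappa_{(\gamma,\lambda,1,0)}(y_i)$ for \emph{every} $\theta>0$. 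Hence the limit as $\theta\to\infty$ trivially exists and is exactly the function whose Legendre transform is $I_{t_1,\ldots,t_m}$ in Proposition~\ref{prop:LD-theta-to-infinity}. From here the argument is identical: $\kappa_{(\gamma,\lambda,1,0)}$ is finite in a neighbourhood of the origin (because the parameter here is $\theta=1>0$), lower semicontinuous and steep, hence essentially smooth, and these properties pass to the separable sum over $i$; the speed $1/a_\theta$ tends to infinity since $a_\theta\to 0$ by the first condition in~\eqref{eq:MD-conditions-theta-to-infinity}. G\"artner--Ellis then delivers the LDP with speed $1/a_\theta$ and good rate function $I_{t_1,\ldots,t_m}$, the identification of the resulting supremum with the stated sum being the same computation as in Proposition~\ref{prop:LD-theta-to-infinity}.

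There is essentially no obstacle here, since the computation above is exact in $\theta$, but two points deserve a comment. First, the second condition in~\eqref{eq:MD-conditions-theta-to-infinity}, namely $\theta^{\gamma-h(\gamma)}a_\theta\to\infty$, is not used in the G\"artner--Ellis argument itself; its role is to place the present scaling strictly between the two extreme regimes, since the formal boundary choice $a_\theta\equiv\theta^{h(\gamma)-\gamma}$ (for which $a_\theta\theta=\theta^{g(\gamma)}$ and $a_\theta\theta^\gamma=\theta^{h(\gamma)}$) recovers Proposition~\ref{prop:LD-theta-to-infinity}, while $a_\theta\equiv 1$ recovers the identically distributed vectors of Proposition~\ref{prop:identically-distributed-rvs}; this is what makes the result a genuine non-central moderate deviation principle, and I would spell this interpretation out in an accompanying remark (Remark~\ref{rem:MD-typical-features}). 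Second, as in the paragraph following Proposition~\ref{prop:LD-theta-to-infinity}, a direct expansion of the non-scaled moment generating function (using $(1-(1-a_\theta y_i)^\gamma)/a_\theta\to\gamma y_i$) shows that $Z_\theta$ converges weakly to the constant vector $(x_1(\gamma),\ldots,x_m(\gamma))$, with $x_i(\gamma)=(t_i-t_{i-1})\lambda\,\sgn{\gamma}\gamma$, which is the unique zero of $I_{t_1,\ldots,t_m}$; this confirms that the whole picture is internally consistent.
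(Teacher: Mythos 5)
Your proposal is correct and follows essentially the same route as the paper: the prefactor $1/a_\theta$ cancels the $a_\theta$ in the spatial scaling, the resulting expression reduces via the $\theta$-invariance \eqref{eq:theta-invariance} to $\sum_{i=1}^m(t_i-t_{i-1})\kappa_{(\gamma,\lambda,1,0)}(y_i)$ exactly (for every $\theta$), and the G\"artner--Ellis Theorem is then applied as in Proposition \ref{prop:LD-theta-to-infinity}. Your observation that the second condition in \eqref{eq:MD-conditions-theta-to-infinity} plays no role in the G\"artner--Ellis argument itself but only in the moderate-deviation interpretation is also accurate and is precisely the content of Remark \ref{rem:MD-typical-features}.
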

\begin{proof}
We want to apply the G\"artner Ellis Theorem. For all $\theta>0$, by taking into account
equation \eqref{eq:theta-invariance} for the last equality below, we get
\begin{multline*}
\frac{1}{1/a_\theta}\log\mathbb{E}\left[\exp\left(\frac{1}{a_\theta}\sum_{i=1}^m y_i
(a_\theta\theta S_{(\gamma,\lambda,\theta,0)}(t_i/(a_\theta\theta^\gamma))-a_\theta\theta S_{(\gamma,\lambda,\theta,0)}(t_{i-1}/(a_\theta\theta^\gamma)))\right)\right]\\
=a_\theta\sum_{i=1}^m\log\mathbb{E}\left[e^{\theta y_i\{S_{(\gamma,\lambda,\theta,0)}(t_i/(a_\theta\theta^\gamma))
-S_{(\gamma,\lambda,\theta,0)}(t_{i-1}/(a_\theta\theta^\gamma))\}}\right]\\
=a_\theta\sum_{i=1}^m\frac{t_i-t_{i-1}}{a_\theta\theta^\gamma}\kappa_{(\gamma,\lambda,\theta,0)}(\theta y_i)
=\sum_{i=1}^m\frac{t_i-t_{i-1}}{\theta^\gamma}\kappa_{(\gamma,\lambda,\theta,0)}(\theta y_i)
=\sum_{i=1}^m(t_i-t_{i-1})\kappa_{(\gamma,\lambda,1,0)}(y_i);
\end{multline*}
so, for all $(y_1,\ldots,y_m)\in\mathbb{R}^m$, we have
\begin{multline*}
\lim_{\theta\to\infty}\frac{1}{1/a_\theta}\log\mathbb{E}\left[\exp\left(\frac{1}{a_\theta}\sum_{i=1}^m y_i
(a_\theta\theta S_{(\gamma,\lambda,\theta,0)}(t_i/(a_\theta\theta^\gamma))-a_\theta\theta S_{(\gamma,\lambda,\theta,0)}(t_{i-1}/(a_\theta\theta^\gamma)))\right)\right]\\
=\sum_{i=1}^m(t_i-t_{i-1})\kappa_{(\gamma,\lambda,1,0)}(y_i).
\end{multline*}
We conclude the proof by considering the same application of the
G\"artner Ellis Theorem presented in the proof of Proposition
\ref{prop:LD-theta-to-infinity}.
\end{proof}

We conclude with some remarks.

\begin{remark}\label{rem:MD-typical-features}
The class of LDPs in Proposition \ref{prop:MD-theta-to-infinity} fill the
gap between the following asymptotic regimes:
\begin{itemize}
\item the convergence of the random variables in Proposition
\ref{prop:LD-theta-to-infinity} to $(x_1(\gamma),\ldots,x_m(\gamma))$;
\item the weak convergence of the random variables in Proposition
\ref{prop:identically-distributed-rvs} that trivially converge to
their common law, and therefore the law of the random vector
$(S_{(\gamma,\lambda,1,0)}(t_i)-S_{(\gamma,\lambda,1,0)}(t_{i-1}))_{i=1,\ldots,m}$.
\end{itemize}
In some sense these two asymptotic regimes can be recovered by considering
two extremal choices for $a_\theta$ in Proposition 
\ref{prop:MD-theta-to-infinity}, i.e. 
$a_\theta=\frac{1}{\theta^{\gamma-h(\gamma)}}=\frac{1}{\theta^{1-g(\gamma)}}$
and $a_\theta=1$, respectively. Note that, in both cases, one condition in 
\eqref{eq:MD-conditions-theta-to-infinity} holds and the other one fails.
\end{remark}

\begin{remark}\label{rem:MD-comments-on-rf}
The rate function $I_{t_1,\ldots,t_m}$ in Propositions \ref{prop:LD-theta-to-infinity}
and \ref{prop:MD-theta-to-infinity} has some connections with the two asymptotic 
regimes as $\theta\to\infty$ presented in Remark \ref{rem:MD-typical-features}.
\begin{itemize}
\item The rate function $I_{t_1,\ldots,t_m}$ uniquely vanishes at
$(x_1(\gamma),\ldots,x_m(\gamma))$ and, as already remarked, this vector is
the limit of the random variables in Proposition \ref{prop:LD-theta-to-infinity}
as $\theta\to\infty$.
\item The Hessian matrix $\left(\left.\frac{\partial^2}{\partial x_i\partial x_j}I_{t_1,\ldots,t_m}
(x_1,\ldots,x_m)\right|_{(x_1,\ldots,x_m)=(x_1(\gamma),\ldots,x_m(\gamma))}\right)_{i,j=1,\ldots,m}$ has some
connections with the law of the random vector
$(S_{(\gamma,\lambda,1,0)}(t_i)-S_{(\gamma,\lambda,1,0)}(t_{i-1}))_{i=1,\ldots,m}$ that
appears in Proposition \ref{prop:identically-distributed-rvs}. More precisely it is
a diagonal matrix by the independence of the increments and, as far as the diagonal entries
are concerned, we have
\begin{multline*}
\left.\frac{\partial^2}{\partial x_i^2}I_{t_1,\ldots,t_m}
(x_1,\ldots,x_m)\right|_{(x_1,\ldots,x_m)=(x_1(\gamma),\ldots,x_m(\gamma))}\\
=\frac{1}{(t_i-t_{i-1})\mathrm{Var}[S_{(\gamma,\lambda,1,0)}(1)]}
=\frac{1}{\mathrm{Var}[S_{(\gamma,\lambda,1,0)}(t_i)-S_{(\gamma,\lambda,1,0)}(t_{i-1})]}
\ (\mbox{for all}\ i\in\{1,\ldots,m\}).
\end{multline*}
\end{itemize}
\end{remark}

\begin{remark}\label{rem:increments-vs-marginals}
The results presented in this section concern the increments of the process.
However we can derive analogue results for the finite dimensional 
distributions of the subordinator. The idea is to combine the above propositions
and a suitable transformation of the involved random vectors with the continuous
function
$$(x_1,\ldots,x_m)\mapsto f(x_1,\ldots,x_m):=\left(x_1,x_1+x_2,\ldots,\sum_{i=1}^m x_i\right).$$
In particular, as far as Propositions \ref{prop:LD-theta-to-infinity} and
\ref{prop:MD-theta-to-infinity} are concerned, we can apply the contraction
principle recalled in Section \ref{sec:preliminaries-LD}. Then we have the 
following statements.
\begin{itemize}
\item For all $\theta>0$, the random vector $\theta S_{(\gamma,\lambda,\theta,0)}(t_i/\theta^\gamma))_{i=1,\ldots,m}$
is distributed as $S_{(\gamma,\lambda,1,0)}(t_i))_{i=1,\ldots,m}$; therefore
$\{(\theta S_{(\gamma,\lambda,\theta,0)}(t_i/\theta^\gamma))_{i=1,\ldots,m}:\theta>0\}$
is a family of identically distributed random vectors.
\item The family of random vectors
$\{(\theta^{g(\gamma)}S_{(\gamma,\lambda,\theta,0)}(t_i/\theta^{h(\gamma)}))_{i=1,\ldots,m}:\theta>0\}$
satisfies the LDP with speed $\theta^{\gamma-h(\gamma)}$, or equivalently 
$\theta^{1-g(\gamma)}$, and good rate function $J_{t_1,\ldots,t_m}$ 
defined by
\begin{multline*}
J_{t_1,\ldots,t_m}(z_1,\ldots,z_m)=\inf\{I_{t_1,\ldots,t_m}(x_1,\ldots,x_m):
f(x_1,\ldots,x_m)=(z_1,\ldots,z_m)\}\\
=I_{t_1,\ldots,t_m}(z_1,z_2-z_1,\ldots,z_m-z_{m-1})=
\sum_{i=1}^m(t_i-t_{i-1})\kappa_{(\gamma,\lambda,1,0)}^*\left(\frac{z_i-z_{i-1}}{t_i-t_{i-1}}\right),
\end{multline*}
where $z_0=0$ in the last equality.
\item If condition \eqref{eq:MD-conditions-theta-to-infinity} holds, then the
random vectors
$\{(a_\theta\theta S_{(\gamma,\lambda,\theta,0)}(t_i/(a_\theta\theta^\gamma)))_{i=1,\ldots,m}:\theta>0\}$
satisfy the LDP with speed $1/a_\theta$ and good rate function $J_{t_1,\ldots,t_m}$
defined in the item above.
\end{itemize}
\end{remark}

\begin{remark}\label{rem:case-theta-to-zero}
All the results above and Remark \ref{rem:increments-vs-marginals} concern the case 
$\theta\to\infty$. In order to obtain the analogue versions for the case $\theta\to 0$
some changes are needed. Proposition \ref{prop:identically-distributed-rvs} is 
essentially without changes because the involved random variables are identically 
distributed. The condition $\gamma-h(\gamma)=1-g(\gamma)>0$ in Propositions 
\ref{prop:LD-theta-to-infinity} and \ref{prop:MD-theta-to-infinity} 
(and in Remark \ref{rem:increments-vs-marginals}) has to be replaced with 
$\gamma-h(\gamma)=1-g(\gamma)<0$. The speed function is again
$\theta^{\gamma-h(\gamma)}=\theta^{1-g(\gamma)}$, which tends to infinity as 
$\theta\to 0$ because $\gamma-h(\gamma)=1-g(\gamma)<0$. Condition 
\eqref{eq:MD-conditions-theta-to-infinity} in Proposition 
\ref{prop:MD-theta-to-infinity} has to be replaced with
\begin{equation}\label{eq:MD-conditions-theta-to-0}
a_\theta\to 0\ \mbox{and}\ \theta^{\gamma-h(\gamma)}a_\theta=\theta^{1-g(\gamma)}a_\theta\to\infty\ (\mbox{as}\ \theta\to 0);
\end{equation}
note that, in both conditions \eqref{eq:MD-conditions-theta-to-infinity} and
\eqref{eq:MD-conditions-theta-to-0}, one requires that $a_\theta$ tends to zero
slowly. We can also present a version of Remark \ref{rem:MD-typical-features}. 
Firstly Proposition \ref{prop:MD-theta-to-infinity} for the case $\theta\to 0$ 
provides a class of LDPs which fill the gap between a convergence to 
$(x_1(\gamma),\ldots,x_m(\gamma))$, and a trivial weak convergence as $\theta\to 0$
for a family of identically distributed random variables (they can be derived by
Propositions \ref{prop:LD-theta-to-infinity} for the case $\theta\to 0$, and
Proposition \ref{prop:identically-distributed-rvs}). Moreover the convergence to a 
constant and the trivial weak convergence correspond to the cases 
$a_\theta=\theta^{-(\gamma-h(\gamma))}=\theta^{-(1-g(\gamma))}$ and $a_\theta=1$; so,
in both cases, one condition in \eqref{eq:MD-conditions-theta-to-0} holds and the other
one fails. Finally, since the rate functions in Propositions \ref{prop:LD-theta-to-infinity}
and \ref{prop:MD-theta-to-infinity} (and in Remark \ref{rem:increments-vs-marginals}) for 
$\theta\to 0$ coincide with the ones for the case $\theta\to\infty$, we can repeat the 
comments in Remark \ref{rem:MD-comments-on-rf} without any changes.
\end{remark}

\begin{remark}\label{rem:self-similarity}
It is possible to consider a more general version of Proposition \ref{prop:identically-distributed-rvs} 
by replacing the process $\{S_{(\gamma,\lambda,\theta,0)}(t):t\geq 0\}$ with a more general 
self-similar process $\{S(t):t\geq 0\}$ with having independent and stationary increments.
More precisely let $\{S(t):t\geq 0\}$ be a self-similar process with index $H>0$ and, again, let
$m\geq 1$ and $0=t_0<t_1<t_2<\cdots<t_m$ be arbitrarily fixed. Then, for all $\theta>0$, the 
random vector $(\theta S(t_i/\theta^{1/H})-\theta S(t_{i-1}/\theta^{1/H}))_{i=1,\ldots,m}$ 
is distributed as $(S(t_i)-S(t_{i-1}))_{i=1,\ldots,m}$. Furthermore a weaker version of this result,
with $m=1$ only, could be considered if we do not require any hypotheses on the increments.
\end{remark}

\section{Large deviations for inverse processes}\label{sec:LD-inverse-processes}
In this section we consider the inverse of 
$\{S_{(\gamma,\lambda,\theta,\delta)}(t):t\geq 0\}$, i.e. the process
$\{T_{(\gamma,\lambda,\theta,\delta)}(t):t\geq 0\}$ defined by
$$T_{(\gamma,\lambda,\theta,\delta)}(t):=\inf\{u>0:S_{(\gamma,\lambda,\theta,\delta)}(u)>t\}.$$

\begin{remark}\label{rem:lambda-scale-parameter-for-delta=0}
Assume that $\delta=0$. Then $\{S_{(\gamma,\lambda,\theta,0)}(t):t\geq 0\}$ and
$\{S_{(\gamma,1,\theta,0)}(\lambda t):t\geq 0\}$ have the same finite-dimensional 
distributions. Thus $\{T_{(\gamma,\lambda,\theta,0)}(t):t\geq 0\}$ and
$\left\{\frac{T_{(\gamma,1,\theta,0)}(t)}{\lambda}:t\geq 0\right\}$ also have the same 
finite-dimensional distributions; however, in what follows, we only need to consider
their one-dimensional distributions.
\end{remark}

Our aim is to illustrate an application of the results for inverse processes in
\cite{DuffieldWhitt}; actually we always consider the simple case in which the
$u,v,w$ in that reference are the identity function. Moreover, since the speed 
for the LDPs in this section is always $v_t=t$, we omit this detail.

A naive approach is to consider the application of the G\"artner Ellis Theorem to 
$\{T_{(\gamma,\lambda,\theta,\delta)}(t)/t:t>0\}$ as $t\to\infty$; in other words, if there 
exists (for all $y\in\mathbb{R}$)
\begin{equation}\label{eq:GE-limit-naive-approach}
\lim_{t\to\infty}\frac{1}{t}\log\mathbb{E}[e^{yT_{(\gamma,\lambda,\theta,\delta)}(t)}]
=\Lambda_{(\gamma,\lambda,\theta,\delta)}(y)
\end{equation}
as an extended real number, and the function $\Lambda_{(\gamma,\lambda,\theta,\delta)}$
satisfies some conditions, we can say that $\{T_{(\gamma,\lambda,\theta,\delta)}(t)/t:t>0\}$
satisfies the LDP with good rate function 
$\Lambda_{(\gamma,\lambda,\theta,\delta)}^*$ defined by
$$\Lambda_{(\gamma,\lambda,\theta,\delta)}^*(x):=\sup_{y\in\mathbb{R}}\{xy-\Lambda_{(\gamma,\lambda,\theta,\delta)}(y)\}.$$
Unfortunately, in general, the moment generating function 
$\mathbb{E}[e^{yT_{(\gamma,\lambda,\theta,\delta)}(t)}]$ is not available.

The approach based on the application of the results in \cite{DuffieldWhitt} allows to overcome
this problem. In order to do that we consider the LDP of 
$\{S_{(\gamma,\lambda,\theta,\delta)}(t)/t:t>0\}$ as $t\to\infty$, and this can be done by
considering an application of the G\"artner Ellis Theorem because the moment generating function 
$\mathbb{E}[e^{yS_{(\gamma,\lambda,\theta,\delta)}(t)}]$ is available. In fact we have
\begin{equation}\label{eq:GE-limit-subordinator}
\lim_{t\to\infty}\frac{1}{t}\log\mathbb{E}[e^{yS_{(\gamma,\lambda,\theta,\delta)}(t)}]
=\kappa_{(\gamma,\lambda,\theta,\delta)}(y)\ (\mbox{for all}\ y\in\mathbb{R}),
\end{equation}
where the function $\kappa_{(\gamma,\lambda,\theta,\delta)}$ has been introduced in Section 
\ref{sec:preliminaries-BCCP}; moreover, if the function $\kappa_{(\gamma,\lambda,\theta,\delta)}$ 
satisfies some conditions (see the case $\theta>0$ below), the LDP holds with good rate function 
$\kappa_{(\gamma,\lambda,\theta,\delta)}^*$ defined by \eqref{eq:Legendre-transform-kappa}.
Then we can apply the results in \cite{DuffieldWhitt} and we have the following claims.

\begin{claim}\label{claim:DW1}
By Theorem 1(i) in \cite{DuffieldWhitt}, $\{T_{(\gamma,\lambda,\theta,\delta)}(t)/t:t>0\}$
satisfies the LDP with good rate function $\Psi_{(\gamma,\lambda,\theta,\delta)}$ defined by
$$\Psi_{(\gamma,\lambda,\theta,\delta)}(x)=x\kappa_{(\gamma,\lambda,\theta,\delta)}^*(1/x)$$
for $x>0$, $\Psi_{(\gamma,\lambda,\theta,\delta)}(0)=\lim_{x\to 0^+}\Psi_{(\gamma,\lambda,\theta,\delta)}(x)$,
and $\Psi_{(\gamma,\lambda,\theta,\delta)}(x)=\infty$ for $x<0$.
\end{claim}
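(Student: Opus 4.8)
The plan is to apply Theorem 1(i) of \cite{DuffieldWhitt} directly, after checking that the hypotheses of that theorem are satisfied by the subordinator $\{S_{(\gamma,\lambda,\theta,\delta)}(t):t\geq 0\}$. First I would recall the abstract setup in \cite{DuffieldWhitt}: one has a nondecreasing process (here $\{S_{(\gamma,\lambda,\theta,\delta)}(t):t\geq 0\}$) whose scaled version $\{S_{(\gamma,\lambda,\theta,\delta)}(t)/t:t>0\}$ satisfies an LDP with speed $t$ and some good rate function, and one wants to transfer this to the (generalized) inverse process $\{T_{(\gamma,\lambda,\theta,\delta)}(t)/t:t>0\}$. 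The inversion formula at the level of rate functions, in the case where the maps $u,v,w$ of that reference are all the identity, is exactly $\Psi(x)=x\,\kappa^*(1/x)$ for $x>0$, with the value at $0$ obtained by taking the limit $x\to 0^+$ and the value $+\infty$ for $x<0$ (the latter simply because $T$ is nonnegative, so negative values are not attainable).

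The key steps, in order, are: (1) establish the LDP for $\{S_{(\gamma,\lambda,\theta,\delta)}(t)/t:t>0\}$ with good rate function $\kappa_{(\gamma,\lambda,\theta,\delta)}^*$; this follows from the G\"artner--Ellis Theorem applied to the limit \eqref{eq:GE-limit-subordinator}, using that $\kappa_{(\gamma,\lambda,\theta,\delta)}$ is finite in a neighborhood of the origin when $\theta>0$ and is essentially smooth (steep) — a fact already verified in Section \ref{sec:preliminaries-BCCP}; (2) verify that $\{S_{(\gamma,\lambda,\theta,\delta)}(t):t\geq 0\}$ meets the structural assumptions of Theorem 1 in \cite{DuffieldWhitt}, namely that it is nondecreasing, has $S_{(\gamma,\lambda,\theta,\delta)}(0)=0$, and that $S_{(\gamma,\lambda,\theta,\delta)}(t)\to\infty$ a.s.\ as $t\to\infty$ (which holds because $\kappa_{(\gamma,\lambda,\theta,\delta)}^\prime(0)=\lambda\gamma\theta^{\gamma-1}>0$ by the remarks in Section \ref{sec:preliminaries-BCCP}, so $S_{(\gamma,\lambda,\theta,\delta)}(t)/t$ converges to a strictly positive constant); (3) invoke Theorem 1(i) of \cite{DuffieldWhitt} to conclude the LDP for $\{T_{(\gamma,\lambda,\theta,\delta)}(t)/t:t>0\}$ with the stated good rate function, and finally (4) identify the resulting rate function with $\Psi_{(\gamma,\lambda,\theta,\delta)}(x)=x\kappa_{(\gamma,\lambda,\theta,\delta)}^*(1/x)$, observing that $\Psi_{(\gamma,\lambda,\theta,\delta)}$ is lower semicontinuous and has compact level sets (goodness), with $\Psi_{(\gamma,\lambda,\theta,\delta)}(0)$ defined as the limit from the right so as to preserve lower semicontinuity.

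The main obstacle I expect is not any hard computation but rather the bookkeeping of hypotheses: one must be careful that the form of the inverse rate function given in \cite{DuffieldWhitt} (stated there for general $u,v,w$) really does reduce to $x\kappa^*(1/x)$ in our setting, and that the conditions under which Theorem 1(i) applies — in particular the requirement that the rate function of the subordinator be good and that $0$ not be an interior point of the effective domain in a way that would spoil the inversion — are genuinely met here. Since we are in the regime $\theta>0$, the moment generating function is finite near the origin and $\kappa_{(\gamma,\lambda,\theta,\delta)}$ is steep, so $\kappa_{(\gamma,\lambda,\theta,\delta)}^*$ is a good convex rate function vanishing only at $\kappa_{(\gamma,\lambda,\theta,\delta)}^\prime(0)>0$; this is precisely the situation covered by Theorem 1(i) of \cite{DuffieldWhitt}, and the claim follows. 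The verification of goodness of $\Psi_{(\gamma,\lambda,\theta,\delta)}$ and the behaviour at $x=0$ and $x<0$ is routine and can be left to the reader.
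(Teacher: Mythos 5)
Your proposal is correct and follows essentially the same route as the paper: the paper obtains the LDP for $\{S_{(\gamma,\lambda,\theta,\delta)}(t)/t:t>0\}$ with good rate function $\kappa_{(\gamma,\lambda,\theta,\delta)}^*$ via the G\"artner--Ellis Theorem (using finiteness near the origin and steepness of $\kappa_{(\gamma,\lambda,\theta,\delta)}$ for $\theta>0$) and then cites Theorem 1(i) of \cite{DuffieldWhitt} with $u,v,w$ the identity to transfer it to the inverse process, exactly as you do. The only minor slip is writing $\kappa_{(\gamma,\lambda,\theta,\delta)}^\prime(0)=\lambda\gamma\theta^{\gamma-1}$ without the factor $\sgn{\gamma}$, which matters for $\gamma<0$, but the conclusion that this derivative is strictly positive is still correct.
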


\begin{claim}\label{claim:DW2}
By Theorem 3(ii) in \cite{DuffieldWhitt} (note that the function $I$ in that reference 
coincides with $\kappa_{(\gamma,\lambda,\theta,\delta)}^*$ in this paper) condition 
\eqref{eq:GE-limit-naive-approach} holds for $y<\kappa_{(\gamma,\lambda,\theta,\delta)}^*(0)$
and we have
\begin{equation}\label{eq:*}
\Lambda_{(\gamma,\lambda,\theta,\delta)}(y)=\sup_{x\in\mathbb{R}}\{xy-\Psi_{(\gamma,\lambda,\theta,\delta)}(x)\};
\end{equation}
moreover we have
$$\kappa_{(\gamma,\lambda,\theta,\delta)}^*(0)=-\lim_{y\to-\infty}\kappa_{(\gamma,\lambda,\theta,\delta)}(y)=
\left\{\begin{array}{ll}
\infty&\ \mbox{if}\ \gamma\in(0,1)\ \mbox{and}\ \delta\geq 0\\
\frac{1}{\delta}\log(1+\lambda\delta\theta^\gamma)&\ \mbox{if}\ \gamma\in(-\infty,0)\ \mbox{and}\ \delta>0\\
\lambda\theta^\gamma&\ \mbox{if}\ \gamma\in(-\infty,0)\ \mbox{and}\ \delta=0.
\end{array}\right.$$
\end{claim}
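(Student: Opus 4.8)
The plan is to invoke Theorem 3(ii) of \cite{DuffieldWhitt} directly, after checking that its hypotheses are met, and then to compute the stated formula for $\kappa_{(\gamma,\lambda,\theta,\delta)}^*(0)$ by an elementary limit argument. First I would recall the setup of \cite{DuffieldWhitt}: the process $\{S_{(\gamma,\lambda,\theta,\delta)}(t)/t:t>0\}$ satisfies the LDP as $t\to\infty$ with good rate function $\kappa_{(\gamma,\lambda,\theta,\delta)}^*$ (this is \eqref{eq:GE-limit-subordinator} together with the steepness checks already carried out in Section \ref{sec:preliminaries-BCCP}), so the function called $I$ in \cite{DuffieldWhitt} is exactly $\kappa_{(\gamma,\lambda,\theta,\delta)}^*$ here. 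Theorem 3(ii) there then asserts that the limit \eqref{eq:GE-limit-naive-approach} exists and equals $\sup_{x}\{xy-\Psi_{(\gamma,\lambda,\theta,\delta)}(x)\}$ precisely for $y$ below the value $I(0)=\kappa_{(\gamma,\lambda,\theta,\delta)}^*(0)$, which is the first assertion of the claim (with $\Psi_{(\gamma,\lambda,\theta,\delta)}$ as given by Claim \ref{claim:DW1}); equation \eqref{eq:*} is just this formula rewritten.

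Next I would establish the identity $\kappa_{(\gamma,\lambda,\theta,\delta)}^*(0)=-\lim_{y\to-\infty}\kappa_{(\gamma,\lambda,\theta,\delta)}(y)$. By definition \eqref{eq:Legendre-transform-kappa}, $\kappa_{(\gamma,\lambda,\theta,\delta)}^*(0)=\sup_{y\in\mathbb{R}}\{-\kappa_{(\gamma,\lambda,\theta,\delta)}(y)\}=-\inf_{y\in\mathbb{R}}\kappa_{(\gamma,\lambda,\theta,\delta)}(y)$. Since $\kappa_{(\gamma,\lambda,\theta,\delta)}$ is the logarithm of the moment generating function of a nonnegative random variable, it is convex with $\kappa_{(\gamma,\lambda,\theta,\delta)}(0)=0$ and $\kappa_{(\gamma,\lambda,\theta,\delta)}^\prime(0)=\kappa_{(\gamma,\lambda,\theta,\delta)}^\prime(0)>0$, hence nondecreasing in $y$ on its whole domain; thus the infimum is attained in the limit $y\to-\infty$, giving the displayed identity.

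It then remains to evaluate $\lim_{y\to-\infty}\kappa_{(\gamma,\lambda,\theta,\delta)}(y)$ in each of the three cases, using the explicit formulas from Section \ref{sec:preliminaries-BCCP}. For $\gamma\in(0,1)$ (any $\delta\geq 0$), as $y\to-\infty$ we have $(\theta-y)^\gamma\to\infty$, so $\sgn{\gamma}(\theta^\gamma-(\theta-y)^\gamma)=\theta^\gamma-(\theta-y)^\gamma\to-\infty$; for $\delta=0$ this gives $\kappa_{(\gamma,\lambda,\theta,0)}(y)=\lambda(\theta^\gamma-(\theta-y)^\gamma)\to-\infty$, and for $\delta>0$ the argument of the logarithm in $-\frac{1}{\delta}\log(1-\lambda\delta(\theta^\gamma-(\theta-y)^\gamma))$ tends to $+\infty$, so again the limit is $-\infty$; hence $\kappa_{(\gamma,\lambda,\theta,\delta)}^*(0)=\infty$. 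For $\gamma\in(-\infty,0)$ we have $\sgn{\gamma}=-1$ and $(\theta-y)^\gamma\to 0$ as $y\to-\infty$, so $\sgn{\gamma}(\theta^\gamma-(\theta-y)^\gamma)\to-\theta^\gamma$; when $\delta=0$ this yields $\kappa_{(\gamma,\lambda,\theta,0)}(y)\to-\lambda\theta^\gamma$, so $\kappa_{(\gamma,\lambda,\theta,0)}^*(0)=\lambda\theta^\gamma$, and when $\delta>0$ it yields $\kappa_{(\gamma,\lambda,\theta,\delta)}(y)\to-\frac{1}{\delta}\log(1+\lambda\delta\theta^\gamma)$, so $\kappa_{(\gamma,\lambda,\theta,\delta)}^*(0)=\frac{1}{\delta}\log(1+\lambda\delta\theta^\gamma)$. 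This matches the three-case display and completes the proof. The only mild subtlety — and thus the step I would be most careful about — is justifying that the hypotheses of Theorem 3(ii) of \cite{DuffieldWhitt} genuinely apply here (in particular that the LDP for $\{S_{(\gamma,\lambda,\theta,\delta)}(t)/t\}$ holds with the stated good rate function, which needs $\theta>0$ so that $\kappa_{(\gamma,\lambda,\theta,\delta)}$ is finite near the origin and the steepness conditions verified in Section \ref{sec:preliminaries-BCCP}); everything else is a direct quotation or a one-line limit.
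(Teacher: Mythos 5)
Your proposal is correct and follows exactly the route the paper intends: the paper states this claim without proof, simply citing Theorem 3(ii) of \cite{DuffieldWhitt} (with the LDP for $\{S_{(\gamma,\lambda,\theta,\delta)}(t)/t\}$ and the rate function $\kappa_{(\gamma,\lambda,\theta,\delta)}^*$ supplied by \eqref{eq:GE-limit-subordinator} and the steepness checks of Section \ref{sec:preliminaries-BCCP}), and your three-case evaluation of $-\lim_{y\to-\infty}\kappa_{(\gamma,\lambda,\theta,\delta)}(y)$ is the elementary computation the paper leaves to the reader. The only nitpick is your justification of monotonicity of $\kappa_{(\gamma,\lambda,\theta,\delta)}$: convexity plus $\kappa'(0)>0$ only gives nondecrease for $y\geq 0$, but the correct reason is already in your sentence, namely that $y\mapsto e^{yS}$ is nondecreasing pointwise for a nonnegative random variable $S$, so $\kappa^*(0)=-\inf_y\kappa(y)=-\lim_{y\to-\infty}\kappa(y)$ as you claim.
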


We also recall that, for all $\delta\geq 0$, we have $\kappa_{(\gamma,\lambda,\theta,\delta)}^*(x)=0$ if and 
only if $x=\kappa_{(\gamma,\lambda,\theta,\delta)}^\prime(0)=\lambda\hspace{1.0pt}\sgn{\gamma}\gamma\theta^{\gamma-1}$; 
thus, by the definition of $\Psi_{(\gamma,\lambda,\theta,\delta)}$, we have
$\Psi_{(\gamma,\lambda,\theta,\delta)}(x)=0$ if and only if
$x=(\kappa_{(\gamma,\lambda,\theta,\delta)}^\prime(0))^{-1}$.

We shall discuss the case $\theta=0$ in Section \ref{sub:theta=0} and the case $\theta>0$ in Section 
\ref{sub:theta>0}. Finally, in Section \ref{sub:GW}, we shall compute the function 
$\Lambda_{(\gamma,\lambda,\theta,\delta)}$ in \eqref{eq:*} when $\delta=0$.

\subsection{Case $\theta=0$}\label{sub:theta=0}
In this case we cannot apply the G\"artner Ellis Theorem to obtain the LDP of $\{S_{(\gamma,\lambda,\theta,\delta)}(t)/t:t>0\}$
as $t\to\infty$; in fact $\kappa_{(\gamma,\lambda,0,\delta)}$ is not finite in a neighborhood of the 
origin. We recall that we only have $\gamma\in(0,1)$ when $\theta=0$. We can obtain the LDP of 
$\{T_{(\gamma,\lambda,0,\delta)}(t)/t:t>0\}$ as $t\to\infty$ only if $\delta=0$. In fact, if we 
consider the Mittag-Leffler function $E_\gamma(x):=\sum_{k=0}^\infty\frac{x^k}{\Gamma(\gamma k+1)}$,
we have
$$\mathbb{E}[e^{yT_{(\gamma,\lambda,0,0)}(t)}]=\mathbb{E}[e^{yT_{(\gamma,1,0,0)}(t)/\lambda}]=E_\gamma\left(\frac{y}{\lambda}t^\gamma\right)$$
by Remark \ref{rem:lambda-scale-parameter-for-delta=0} and by a well-known result in the literature 
for $\lambda=1$ (see e.g. eq. (24) in \cite{MainardiMuraPagnini}, or eq. (16) in \cite{Bingham} for 
the case $y\leq 0$). Then, by taking into account the asymptotic behavior of the Mittag-Leffler 
function as its argument tends to infinity (see e.g. eq. (1.8.27) in \cite{KilbasSrivastavaTrujillo}),
we have
\begin{equation}\label{eq:GE-limit-for-inverse-particular-case}
\lim_{t\to\infty}\frac{1}{t}\log\mathbb{E}[e^{yT_{(\gamma,\lambda,0,0)}(t)}]
=\left\{\begin{array}{ll}
(y/\lambda)^{1/\gamma}&\ \mbox{if}\ y\geq 0\\
0&\ \mbox{if}\ y<0
\end{array}\right.=:\Lambda_{(\gamma,\lambda,0,0)}(y).
\end{equation}
So, in this case, we can consider the naive approach discussed above (see the sentence with equation 
\eqref{eq:GE-limit-naive-approach}). Then the G\"artner Ellis Theorem yields the LDP of 
$\{T_{(\gamma,\lambda,0,0)}(t)/t:t>0\}$ as $t\to\infty$ with good
rate function $\Psi_{(\gamma,\lambda,0,0)}:=\Lambda_{(\gamma,\lambda,0,0)}^*$, i.e.
\begin{equation}\label{eq:Psi-for-theta=delta=0}
\Psi_{(\gamma,\lambda,0,0)}(x):=\sup_{y\in\mathbb{R}}\{xy-\Lambda_{(\gamma,\lambda,0,0)}(y)\}
=\left\{\begin{array}{ll}
\lambda^{1/(1-\gamma)}(\gamma^{\gamma/(1-\gamma)}-\gamma^{1/(1-\gamma)})x^{1/(1-\gamma)}&\ \mbox{if}\ x\geq 0\\
\infty&\ \mbox{if}\ x<0;
\end{array}\right.
\end{equation}
moreover, noting that $\Lambda_{(\gamma,\lambda,0,0)}^\prime(0)=0$, we
have $\Lambda_{(\gamma,\lambda,0,0)}^*(x)=0$ if and only if $x=0$.

\subsection{Case $\theta>0$}\label{sub:theta>0}
In this case we can apply the G\"artner Ellis Theorem by considering the limit in 
\eqref{eq:GE-limit-subordinator}; in fact the function $\kappa_{(\gamma,\lambda,\theta,\delta)}(y)$
is finite in a neighborhood of the origin. However, we cannot provide an explicit expression of 
$\kappa_{(\gamma,\lambda,\theta,\delta)}^*$ and $\Psi_{(\gamma,\lambda,\theta,\delta)}$ when $\delta>0$.
On the contrary, this is feasible when $\delta=0$. In fact, after some easy computations, we get:
$$\kappa_{(\gamma,\lambda,\theta,0)}^*(x)=x\left(\theta-\left(\frac{x}{\lambda\hspace{1.0pt}\sgn{\gamma}\gamma}\right)^{1/(\gamma-1)}\right)
-\lambda\hspace{1.0pt}\sgn{\gamma}\left(\theta^\gamma-
\left(\frac{x}{\lambda\hspace{1.0pt}\sgn{\gamma}\gamma}\right)^{\gamma/(\gamma-1)}\right)\ (\mbox{for all}\ x>0)$$
and $\kappa_{(\gamma,\lambda,\theta,\delta)}^*(0)=\infty$;
\begin{equation}\label{eq:Psi-for-delta=0}
\Psi_{(\gamma,\lambda,\theta,0)}(x)=\theta-(\lambda\hspace{1.0pt}\sgn{\gamma}\gamma x)^{1/(1-\gamma)}
+\lambda\hspace{1.0pt}\sgn{\gamma}x\left((\lambda\hspace{1.0pt}\sgn{\gamma}\gamma x)^{\gamma/(1-\gamma)}
-\theta^\gamma\right)\ (\mbox{for all}\ x\geq 0).
\end{equation}
Moreover, in particular, the right derivative of $\Psi_{(\gamma,\lambda,\theta,0)}$ at $y=0$ is
\begin{equation}\label{eq:right-derivative-of-Psi-at-zero}
\Psi_{(\gamma,\lambda,\theta,0)}^\prime(0)=\left\{\begin{array}{ll}
-\lambda\theta^\gamma&\ \mbox{if}\ \gamma\in(0,1)\\
-\infty&\ \mbox{if}\ \gamma\in(-\infty,0).
\end{array}\right.
\end{equation}
We also remark that, when $\gamma\in(0,1)$, the expression of $\Psi_{(\gamma,\lambda,\theta,0)}$ in
\eqref{eq:Psi-for-delta=0} yields
\begin{equation}\label{eq:psi-theta=0-and-theta>0}
\Psi_{(\gamma,\lambda,\theta,0)}(x)=\theta+\Psi_{(\gamma,\lambda,0,0)}(x)-\lambda\theta^\gamma x\ (\mbox{for all}\ x\geq 0),
\end{equation}
where $\Psi_{(\gamma,\lambda,0,0)}$ is the function computed for the case $\theta=0$ (see
\eqref{eq:Psi-for-theta=delta=0}).

\subsection{The function $\Lambda_{(\gamma,\lambda,\theta,\delta)}$ in \eqref{eq:*}, for $\delta=0$}\label{sub:GW}
We restrict the attention to the case $\delta=0$ because we have an explicit expression
for $\kappa_{(\gamma,\lambda,\theta,\delta)}^*$ and, obviously, also for
$\Psi_{(\gamma,\lambda,\theta,\delta)}$. More precisely, by taking into 
account \eqref{eq:psi-theta=0-and-theta>0}, we consider
$\Psi_{(\gamma,\lambda,\theta,\delta)}$ in \eqref{eq:Psi-for-delta=0} for $\theta\geq 0$,
with $\gamma\in(0,1)$ when $\theta=0$.

So we have
$$\Lambda_{(\gamma,\lambda,\theta,0)}(y)=\sup_{x\geq 0}\{xy-\Psi_{(\gamma,\lambda,\theta,0)}(x)\}$$
where, if we consider the positive constant $c_\gamma$ defined by
$$c_\gamma:=\left\{\begin{array}{ll}
\gamma^{\gamma/(1-\gamma)}-\gamma^{1/(1-\gamma)}&\ \mbox{if}\ \gamma\in(0,1)\\
(-\gamma)^{\gamma/(1-\gamma)}+(-\gamma)^{1/(1-\gamma)}&\ \mbox{if}\ \gamma\in(-\infty,0),
\end{array}\right.$$
we have
$$\Psi_{(\gamma,\lambda,\theta,0)}(x)=\left\{\begin{array}{ll}
\theta-\lambda\theta^\gamma x+\lambda^{1/(1-\gamma)}c_\gamma x^{\gamma/(1-\gamma)}&\ \mbox{if}\ \gamma\in(0,1)\\
\theta+\lambda\theta^\gamma x-\lambda^{1/(1-\gamma)}c_\gamma x^{\gamma/(1-\gamma)}&\ \mbox{if}\ \gamma\in(-\infty,0)
\end{array}\right.\ \mbox{for all}\ x\geq 0.$$

Then we can state some results in the following lemma. Note that the next formula \eqref{eq:Lambda-gamma-positivo} 
with $\theta=0$ meets the expression of the limit in \eqref{eq:GE-limit-for-inverse-particular-case}.

\begin{lemma}\label{lem:GW}
We have:
\begin{equation}\label{eq:Lambda-gamma-positivo}
\Lambda_{(\gamma,\lambda,\theta,0)}(y)=\left\{\begin{array}{ll}
-\theta&\ \mbox{if}\ y<-\lambda\theta^\gamma\\
\left(\theta^\gamma+\frac{y}{\lambda}\right)^{1/\gamma}-\theta&\ \mbox{if}\ y\geq-\lambda\theta^\gamma
\end{array}\right.\ \mbox{for}\ \gamma\in(0,1),
\end{equation}
\begin{equation}\label{eq:Lambda-gamma-negativo}
\Lambda_{(\gamma,\lambda,\theta,0)}(y)=\left\{\begin{array}{ll}
\left(\theta^\gamma-\frac{y}{\lambda}\right)^{1/\gamma}-\theta&\ \mbox{if}\ y<\lambda\theta^\gamma\\
\infty&\ \mbox{if}\ y\geq\lambda\theta^\gamma
\end{array}\right.\ \mbox{for}\ \gamma\in(-\infty,0),
\end{equation}
and, in both cases,
$$\Psi_{(\gamma,\lambda,\theta,0)}(x)=\sup_{y\in\mathbb{R}}\{xy-\Lambda_{(\gamma,\lambda,\theta,0)}(y)\}.$$
\end{lemma}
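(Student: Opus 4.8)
The plan is to compute $\Lambda_{(\gamma,\lambda,\theta,0)}$ by evaluating the Legendre transform $\sup_{x\geq 0}\{xy-\Psi_{(\gamma,\lambda,\theta,0)}(x)\}$ in the two parameter ranges separately, and then to get the reverse identity for free from convex duality. For $\gamma\in(0,1)$ the cleanest route is the shift relation \eqref{eq:psi-theta=0-and-theta>0}, $\Psi_{(\gamma,\lambda,\theta,0)}(x)=\theta+\Psi_{(\gamma,\lambda,0,0)}(x)-\lambda\theta^\gamma x$ for $x\geq 0$ (and $+\infty$ for $x<0$, by Claim~\ref{claim:DW1}), together with the fact---recorded in \eqref{eq:Psi-for-theta=delta=0}---that $\Psi_{(\gamma,\lambda,0,0)}$ is the Legendre transform of $\Lambda_{(\gamma,\lambda,0,0)}$. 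Since $\Lambda_{(\gamma,\lambda,0,0)}$ in \eqref{eq:GE-limit-for-inverse-particular-case} is a lower semicontinuous proper convex function (it is $C^1$, equal to $0$ on $(-\infty,0]$ and to $(y/\lambda)^{1/\gamma}$ with exponent $1/\gamma>1$, hence convex, on $[0,\infty)$, with matching one-sided derivatives $0$ at the origin), the Fenchel--Moreau theorem gives $\sup_{x\in\mathbb{R}}\{xz-\Psi_{(\gamma,\lambda,0,0)}(x)\}=\Lambda_{(\gamma,\lambda,0,0)}(z)$. Absorbing the linear term into the argument then yields $\Lambda_{(\gamma,\lambda,\theta,0)}(y)=\Lambda_{(\gamma,\lambda,0,0)}(y+\lambda\theta^\gamma)-\theta$, which, after inserting the explicit form of $\Lambda_{(\gamma,\lambda,0,0)}$, is exactly \eqref{eq:Lambda-gamma-positivo}.

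For $\gamma\in(-\infty,0)$ I would optimize directly. Expanding and collecting terms in \eqref{eq:Psi-for-delta=0} gives $\Psi_{(\gamma,\lambda,\theta,0)}(x)=\theta+\lambda\theta^\gamma x-\lambda^{1/(1-\gamma)}c_\gamma x^{1/(1-\gamma)}$ for $x\geq 0$ (and $+\infty$ for $x<0$), so with $q:=1/(1-\gamma)\in(0,1)$ the objective $xy-\Psi_{(\gamma,\lambda,\theta,0)}(x)$ equals $-\theta-(\lambda\theta^\gamma-y)x+\lambda^{1/(1-\gamma)}c_\gamma x^{q}$ on $[0,\infty)$. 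If $y\geq\lambda\theta^\gamma$, both non-constant terms are nondecreasing in $x\geq 0$ with at least one strictly increasing, hence the supremum is $+\infty$ and $\Lambda_{(\gamma,\lambda,\theta,0)}(y)=\infty$. If $y<\lambda\theta^\gamma$, the objective is strictly concave on $[0,\infty)$ (affine plus the concave power $x^q$), with derivative $+\infty$ at $0^+$ and $-(\lambda\theta^\gamma-y)<0$ at $+\infty$; thus there is a unique maximizer $x^*$ solving $\lambda\theta^\gamma-y=\lambda^{1/(1-\gamma)}c_\gamma q\,(x^*)^{q-1}$, with maximal value $-\theta-\gamma(\lambda\theta^\gamma-y)x^*$ (using $(q-1)/q=\gamma$). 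Substituting $x^*$ and using $q-1=\gamma/(1-\gamma)$ together with the identity $c_\gamma=(1-\gamma)(-\gamma)^{\gamma/(1-\gamma)}$ (obtained by factoring $(-\gamma)^{\gamma/(1-\gamma)}$ out of the definition of $c_\gamma$), all powers of $\lambda$ and $-\gamma$ cancel and the value reduces to $(\theta^\gamma-y/\lambda)^{1/\gamma}-\theta$, i.e. \eqref{eq:Lambda-gamma-negativo}. The same direct argument also covers $\gamma\in(0,1)$, now with $q>1$ so that $x^q$ is convex and the subcase $y\leq-\lambda\theta^\gamma$ gives supremum $0$ at $x=0$; one checks that the two pieces of \eqref{eq:Lambda-gamma-positivo} agree at $y=-\lambda\theta^\gamma$.

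For the reverse identity $\Psi_{(\gamma,\lambda,\theta,0)}(x)=\sup_{y\in\mathbb{R}}\{xy-\Lambda_{(\gamma,\lambda,\theta,0)}(y)\}$ no further optimization is needed: the function $\Psi_{(\gamma,\lambda,\theta,0)}$, extended by $+\infty$ on $(-\infty,0)$, is lower semicontinuous, proper and convex (it is continuous on $[0,\infty)$, including the value $\theta$ at the origin, and on $[0,\infty)$ it is an affine term plus $\pm\lambda^{1/(1-\gamma)}c_\gamma x^{1/(1-\gamma)}$, which is convex since the exponent exceeds $1$ when $\gamma\in(0,1)$ and lies in $(0,1)$ with a minus sign when $\gamma\in(-\infty,0)$). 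Since the $\Lambda_{(\gamma,\lambda,\theta,0)}$ computed above is precisely the Legendre transform of this extended $\Psi_{(\gamma,\lambda,\theta,0)}$, Fenchel--Moreau gives that conjugating once more returns $\Psi_{(\gamma,\lambda,\theta,0)}$, and for $x\geq 0$ this is the asserted formula (for $x<0$ both sides are $+\infty$).

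I expect the main obstacle to be the bookkeeping in the case $\gamma\in(-\infty,0)$: splitting correctly on the sign of $\lambda\theta^\gamma-y$, verifying the concavity and the boundary behaviour of the objective so that the stationary point is genuinely the maximizer, and---above all---simplifying the closed-form maximum, which does not visibly telescope to $(\theta^\gamma-y/\lambda)^{1/\gamma}$ until one notices the identity $c_\gamma=(1-\gamma)(-\gamma)^{\gamma/(1-\gamma)}$ (and its counterpart $c_\gamma=(1-\gamma)\gamma^{\gamma/(1-\gamma)}$ for $\gamma\in(0,1)$).
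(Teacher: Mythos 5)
Your proposal is correct, and it supplies exactly the ``standard computations'' that the paper's proof omits entirely: a direct evaluation of $\sup_{x\geq 0}\{xy-\Psi_{(\gamma,\lambda,\theta,0)}(x)\}$ (via the shift relation \eqref{eq:psi-theta=0-and-theta>0} for $\gamma\in(0,1)$ and via the stationary-point computation with the factorization $c_\gamma=(1-\gamma)(-\gamma)^{\gamma/(1-\gamma)}$ for $\gamma<0$), followed by Fenchel--Moreau for the reverse identity, and I have checked that the algebra closes. Note also that you correctly work with the exponent $1/(1-\gamma)$ for $x$, consistent with \eqref{eq:Psi-for-delta=0} and \eqref{eq:Psi-for-theta=delta=0}, whereas the display immediately preceding the lemma misprints it as $\gamma/(1-\gamma)$; your only slip is the harmless wording that the subcase $y\leq-\lambda\theta^\gamma$ ``gives supremum $0$ at $x=0$'', where the supremum of the objective is of course $-\Psi_{(\gamma,\lambda,\theta,0)}(0)=-\theta$, as required.
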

\begin{proof}
All the results can be proved with some standard computations. The details are omitted.
\end{proof}

We conclude with another remark concerning both cases $\gamma\in(0,1)$ and $\gamma\in(-\infty,0)$.
Note that equation \eqref{eq:GW} in the following remark has some analogies with equations (12)-(13)
in \cite{GlynnWhitt} where the authors deal with counting processes, which are non-decreasing 
processes.

\begin{remark}\label{rem:GW}
For all $x\geq 0$ we have
$$x\kappa_{(\gamma,\lambda,\theta,0)}^*(1/x)=x\sup_{y\leq\theta}\{y/x-\kappa_{(\gamma,\lambda,\theta,0)}(y)\}
=\sup_{y\leq\theta}\{y-x\kappa_{(\gamma,\lambda,\theta,0)}(y)\};$$
moreover, if we consider the change of variable $z=-\kappa_{(\gamma,\lambda,\theta,0)}(y)$
and if we set
$$\mathcal{I}:=(-\kappa_{(\gamma,\lambda,\theta,0)}(\theta),-\kappa_{(\gamma,\lambda,\theta,0)}(-\infty)),$$
then we get
$$x\kappa_{(\gamma,\lambda,\theta,0)}^*(1/x)=
\sup_{z\in\mathcal{I}}\left\{\kappa_{(\gamma,\lambda,\theta,0)}^{-1}(-z)+xz\right\}=
\sup_{z\in\mathcal{I}}\left\{xz-(-\kappa_{(\gamma,\lambda,\theta,0)}^{-1}(-z))\right\}.$$
Thus $\Psi_{(\gamma,\lambda,\theta,0)}$ can be seen as the Fenchel-Legendre transform of
the function
\begin{equation}\label{eq:GW}
z\mapsto\tilde{\Psi}(z):=-\kappa_{(\gamma,\lambda,\theta,0)}^{-1}(-z),
\end{equation}
where $z$ belongs to a suitable set where the inverse function is well-defined.
In fact we have
$$\mathcal{I}=\left\{\begin{array}{ll}
(-\lambda\theta^\gamma,\infty)&\ \mbox{if}\ \gamma\in(0,1)\\
(-\infty,\lambda\theta^\gamma)&\ \mbox{if}\ \gamma\in(-\infty,0).
\end{array}\right.$$
and, in both cases $\gamma\in(-\infty,0)$ and $\gamma\in(0,1)$, the interval $\mathcal{I}$
coincides with the set where the function $\Lambda_{(\gamma,\lambda,\theta,0)}$ is strictly 
increasing and finite (see \eqref{eq:Lambda-gamma-positivo} and 
\eqref{eq:Lambda-gamma-negativo}).
\end{remark}

\section{Large deviations for time-changes with inverse processes}\label{sec:LD-time-changes}
The aim of this section is to present some applications of the G\"artner Ellis Theorem
in order to obtain LDPs for $\{X(T_{(\gamma,\lambda,\theta,\delta)}(t))/t:t>0\}$, when 
$\{X(t):t\geq 0\}$ is some suitable $\mathbb{R}^h$-valued process (for some integer 
$h\geq 1$), and independent of $\{T_{(\gamma,\lambda,\theta,\delta)}(t):t\geq 0\}$. Actually,
since we want to refer to the contents of Sections \ref{sub:theta=0} and \ref{sub:theta>0}, 
in this section we always restrict the attention to the case $\delta=0$. Moreover all the LDPs
stated in this section holds with speed $t$; therefore we always omit this detail (as we did 
in Section \ref{sec:LD-inverse-processes}).

The simplest case is when $\{X(t):t\geq 0\}$ is a L\'evy process; in fact we have
$$\mathbb{E}[e^{\langle\eta,X(t)\rangle}]=e^{t\Lambda_X(\eta)}\ (\mbox{for all}
\ \eta\in\mathbb{R}^h),\ \mbox{where}\ \Lambda_X(\eta):=\log\mathbb{E}[e^{\langle\eta,X(1)\rangle}].$$
In this case the application of the G\"artner Ellis Theorem works well when the function 
$\Lambda_X$ is finite in a neighborhood of the origin $\eta=0\in\mathbb{R}^h$; if $h=1$ this
means that all the random variables $\{X(t):t\geq 0\}$ are light tailed distributed (see e.g.
\cite{AsmussenAlbrecher}, Chapter I, Section 2).

A more general situation concerns additive functionals of Markov processes (here we recall
\cite{Veretennikov} as a reference with results based on the G\"artner Ellis Theorem); however,
for simplicity, we refer to the case of Markov additive processes (see e.g. 
\cite{AsmussenAlbrecher}, Chapter III, Section 4; actually the presentation in that reference 
concerns the case $h=1$). We have a Markov additive process $\{(J(t),X(t)):t\geq 0\}$ if, for 
some set $E$, it is a $E\times\mathbb{R}^h$-valued Markov process with suitable properties; in 
particular $\{J(t):t\geq 0\}$ is a Markov process. We refer to the continuous time case with a 
finite state space $E$ for $\{J(t):t\geq 0\}$; see e.g. \cite{AsmussenAlbrecher}, page 55. We 
also assume that $\{J(t):t\geq 0\}$ is irreducible and, for simplicity, that
$\mathbb{E}[e^{\langle\eta,X(t)\rangle}]<\infty$ for all $\eta\in\mathbb{R}^h$.
Then, as a consequence of Proposition 4.4 in Chapter III in \cite{AsmussenAlbrecher}, we have
$$\min_{i\in E}h_i(\eta)e^{t\Lambda_X(\eta)}\leq\mathbb{E}[e^{\langle\eta,X(t)\rangle}]\leq\max_{i\in E}h_i(\eta)e^{t\Lambda_X(\eta)}$$
where $e^{t\Lambda_X(\eta)}$ is a suitable simple and positive eigenvalue and 
$(h_i(\eta))_{i\in E}$ is a positive eigenvector (these items can be found by a suitable 
application of the Perron Frobenius Theorem).

Now we are ready to illustrate the applications of the G\"artner Ellis Theorem which 
provides the LDP for $\{X(T_{(\gamma,\lambda,\theta,\delta)}(t))/t:t>0\}$ with rate function 
$H_{(\gamma,\lambda,\theta,0)}$, say. In particular we can have a trapping and delaying
effect for $\theta=0$ (see Remark \ref{rem:trapping-delaying}), and a possible rushing 
effect for $\theta>0$; we recall a recent reference with this kind of analysis for 
time-changed processes is \cite{CapitanelliDovidio}, even if the approach in this paper
is different from the one in that reference. We also give some comments on the behavior of 
$H_{(\gamma,\lambda,\theta,0)}(x)$ around the origin $x=0$ for $h=1$; this will be done for
both cases $\theta=0$ and $\theta>0$, and we see that right and left derivatives at $x=0$ 
(which will be denoted by $D_-H_{(\gamma,\lambda,\theta,0)}(0)$ and 
$D_+H_{(\gamma,\lambda,\theta,0)}(0)$) can be different.

\subsection{Case $\theta=0$}\label{sub:theta=0-tc}
Here we refer to the content of Section \ref{sub:theta=0}. We also recall that we only have 
$\gamma\in(0,1)$ when $\theta=0$. Then, after some standard computations (with a conditional 
expectation with respect to the independent random time-change), we get
\begin{multline*}
\lim_{t\to\infty}\frac{1}{t}\log\mathbb{E}[e^{\langle\eta,X(T_{(\gamma,\lambda,0,0)}(t))\rangle}]=
\lim_{t\to\infty}\frac{1}{t}\log E_\gamma\left(\frac{\Lambda_X(\eta)}{\lambda}t^\gamma\right)\\
=\left\{\begin{array}{ll}
(\Lambda_X(\eta)/\lambda)^{1/\gamma}&\ \mbox{if}\ \Lambda_X(\eta)\geq 0\\
0&\ \mbox{if}\ \Lambda_X(\eta)<0
\end{array}\right.=\Lambda_{(\gamma,\lambda,0,0)}(\Lambda_X(\eta)),
\end{multline*}
where $\Lambda_{(\gamma,\lambda,0,0)}(\cdot)$ is the function in \eqref{eq:GE-limit-for-inverse-particular-case}
(see also \eqref{eq:Lambda-gamma-positivo} with $\theta=0$).

Then, under suitable hypotheses, by the G\"artner Ellis Theorem, $\{X(T_{(\gamma,\lambda,0,0)}(t))/t:t>0\}$
satisfies the LDP with good rate function $H_{(\gamma,\lambda,0,0)}$ defined by
$$H_{(\gamma,\lambda,0,0)}(x):=\sup_{\eta\in\mathbb{R}^h}\{\langle\eta,x\rangle-\Lambda_{(\gamma,\lambda,0,0)}(\Lambda_X(\eta))\}.$$
We can say that $H_{(\gamma,\lambda,0,0)}(x)=0$ if and only if $x=\Lambda_{(\gamma,\lambda,0,0)}^\prime(\Lambda_X(0))\nabla\Lambda_X(0)$;
thus, since $\Lambda_X(0)=0$ and $\Lambda_{(\gamma,\lambda,0,0)}^\prime(0)=0$, 
we have $H_{(\gamma,1,0,0)}(x)=0$ if and only if $x=0$, whatever is 
$\nabla\Lambda_X(0)$. 

\begin{remark}\label{rem:trapping-delaying}
We can say that $\frac{X(T_{(\gamma,\lambda,0,0)}(t))}{t}$ converges to zero as $t\to\infty$
(at least in probability; see Remark \ref{rem:convergence-under-GET} for a discussion on the 
almost sure of $\frac{X(T_{(\gamma,\lambda,0,0)}(t_n))}{t_n}$ along a sequence 
$\{t_n:n\geq 1\}$ such that $t_n\to\infty$), and this happens whatever is the limit 
$\nabla\Lambda_X(0)$ of $\frac{X(t)}{t}$. This is not surprising because random 
time-changes with $\{T_{(\gamma,\lambda,0,0)}(t):t\geq 0\}$ typically give rise to a sort of 
trapping and delaying effect; a discussion on this aspect for random time-changes can be
found in \cite{CapitanelliDovidio}.
\end{remark}

We conclude with some statements for the case $h=1$. In what follows we consider certain inequalities; 
however similar statements hold if we consider inverse inequalities. We assume that $\Lambda_X^\prime(0)>0$.
\begin{itemize}
\item If there exists $\eta_0<0$ such that $\Lambda_X(\eta_0)=0$ (note that this condition can occur because
$\Lambda_X$ is convex and $\Lambda_X(0)=0$), we can say that $D_-H_{(\gamma,\lambda,0,0)}(0)=\eta_0$ and 
$D_+H_{(\gamma,\lambda,0,0)}(0)=0$.
\item On the contrary, if $\Lambda_X$ is strictly increasing (and therefore uniquely vanishes at $\eta=0$), we have 
$H_{(\gamma,\lambda,0,0)}(x)=\infty$ for all $x<0$ instead of $D_-H_{(\gamma,\lambda,0,0)}(0)=\eta_0$.
\end{itemize}


\subsection{Case $\theta>0$}\label{sub:theta>0-tc}
Here we refer to the content of Section \ref{sub:theta>0}. We start with the same standard computations 
considered in Section \ref{sub:theta=0} but here we cannot refer to \eqref{eq:GE-limit-for-inverse-particular-case}.
In fact in this case we refer to Claim \ref{claim:DW2} in order to have the limit \eqref{eq:GE-limit-naive-approach}
for all $y\in\mathbb{R}$; so, as stated in Claim \ref{claim:DW2}, we take $\gamma\in(0,1)$ in order to 
have $\kappa_{(\gamma,\lambda,\theta,\delta)}^*(0)=\infty$. Then we get
$$\lim_{t\to\infty}\frac{1}{t}\log\mathbb{E}[e^{\langle\eta,X(T_{(\gamma,\lambda,\theta,0)}(t))\rangle}]
=\Lambda_{(\gamma,\lambda,\theta,0)}(\Lambda_X(\eta));$$
moreover $\Lambda_{(\gamma,\lambda,\theta,0)}(\cdot)$ is given by \eqref{eq:Lambda-gamma-positivo}.

Then, under suitable hypotheses, by the G\"artner Ellis Theorem,
$\{X(T_{(\gamma,\lambda,\theta,0)}(t))/t:t>0\}$ satisfies the LDP with good rate function
$H_{(\gamma,\lambda,\theta,0)}$ defined by
$$H_{(\gamma,\lambda,\theta,0)}(x):=\sup_{\eta\in\mathbb{R}^h}\{\langle\eta,x\rangle-\Lambda_{(\gamma,\lambda,\theta,0)}(\Lambda_X(\eta))\}.$$
We can say that $H_{(\gamma,\lambda,\theta,0)}(x)=0$ if and only if $x=\Lambda_{(\gamma,\lambda,\theta,0)}^\prime(\Lambda_X(0))\nabla\Lambda_X(0)$;
thus, since $\Lambda_X(0)=0$ and $\Lambda_{(\gamma,\lambda,\theta,0)}^\prime(0)=\frac{\theta^{1-\gamma}}{\lambda\gamma}$ (note that
$\Lambda_{(\gamma,\lambda,\theta,0)}^\prime(0)=(\kappa_{(\gamma,\lambda,\theta,0)}^\prime(0))^{-1}$ as one can expect), we have 
$H_{(\gamma,\lambda,\theta,0)}(x)=0$ if and only if $x=\frac{\theta^{1-\gamma}}{\lambda\gamma}\nabla\Lambda_X(0)$.
So $X(T_{(\gamma,\lambda,\theta,0)}(t))/t$ converges to a limit that depends on $\nabla\Lambda_X(0)$, and we have a possible rushing effect.

We conclude with some statements for the case $h=1$. In what follows we consider certain inequalities; 
however similar statements hold if we consider inverse inequalities.
\begin{itemize}
\item If there exists $\eta_1<\eta_2<0$ such that $\Lambda_X(\eta_1)=\Lambda_X(\eta_2)=-\lambda\theta^\gamma$
(and this happens if $\Lambda_X^\prime(0)>0$), then $D_-H_{(\gamma,\lambda,\theta,0)}(0)=\eta_1$ and 
$D_+H_{(\gamma,\lambda,\theta,0)}(0)=\eta_2$.
\item On the contrary, if there exists a unique $\eta_0<0$ such that $\Lambda_X(\eta_0)=-\lambda\theta^\gamma$
(and this could happen if $\Lambda_X$ is strictly increasing) we have $D_+H_{(\gamma,\lambda,\theta,0)}(0)=\eta_0$
and $H_{(\gamma,\lambda,\theta,0)}(x)=\infty$ for $x<0$.
\end{itemize}

\begin{remark}\label{rem:HversusPsi}
Note that $H_{(\gamma,\lambda,\theta,0)}$ coincides with $\Psi_{(\gamma,\lambda,\theta,0)}$ when we have
$X(t)=t$ for all $t\geq 0$. In such a case $\Lambda_X(\eta)=\eta$ for all $\eta\in\mathbb{R}$ and 
therefore we have $\Lambda_X(\eta_0)=-\lambda\theta^\gamma$ for $\eta_0=-\lambda\theta^\gamma<0$. Thus we get $D_+H_{(\gamma,\lambda,\theta,0)}(0)=-\lambda\theta^\gamma$ and this agrees with the right derivative of
$\Psi_{(\gamma,\lambda,\theta,0)}(y)$ at $y=0$ in \eqref{eq:right-derivative-of-Psi-at-zero} for 
$\gamma\in(0,1)$.
\end{remark}


\subsection*{Acknowledgements}
We thank an anonymous referee for the careful reading of an earlier version of the manuscript,
and for some useful comments. We also thank Luisa Beghin for some minor comments on the 
presentation.

\end{document}